\newcommand{\Rho}{\mathrm{P}}
\renewcommand*\@fnsymbol[1]{\the#1} 
\newtheorem{theorem}{Theorem}
\newtheorem{lemma}[theorem]{Lemma}
\newtheorem{corollary}[theorem]{Corollary}
\theoremstyle{definition} \newtheorem*{definition}{Definition}
\theoremstyle{definition} \newtheorem*{notation}{Notation}
\theoremstyle{definition} \newtheorem*{note}{Note}
\newenvironment{atheorem}[2][Theorem]{\begin{trivlist}
\item[\hskip \labelsep {\bfseries #1}\hskip \labelsep {\bfseries #2}]}{\end{trivlist}}
\def\noqed{\renewcommand{\qedsymbol}{}}
\numberwithin{equation}{section}
\begin{document}

\title{On Commutation Semigroups of Dihedral Groups}
\author{Darien DeWolf\thanks{Mathematics Department, Dalhousie University, Halifax, Nova Scotia , Canada, B3H 4R2} \and Charles Edmunds\thanks{Mathematics Department, Mount Saint Vincent University, Halifax, Nova Scotia, Canada, B3M 2J6} \and Christopher Levy\thanks{Mathematics Department, Dalhousie University, Halifax, Nova Scotia , Canada, B3H 4R2}}
\date{}
\maketitle
\begin{center}
\emph{In Memory of Narain Gupta}
\end{center}

\section{Introduction} 
For any group $ G $ with $ g\in G $, the right and left commutation mappings associated with $ g $ are the mappings $ \rho (g){\text{ and }}\lambda (g) $ from $ G $ to $ G $ defined as
\[
(x)\rho (g) = [x,g] \mbox{ and } (x)\lambda (g) = [g,x],
\]
where the commutator of $ g $ and $ h $ is defined as $ [g,h] = {g^{ - 1}}{h^{ - 1}}gh $. The set $ \mathcal{M}(G) $ of all mappings from $ G $ to $ G $ forms a semigroup under composition of mappings.  The \emph{right commutation semigroup of} $ G $, $ {\rm P}(G) $,  is the subsemigroup of $ \mathcal{M}(G) $ generated by the set of $ \rho $\emph{-maps}, $ {{\rm P}_1}(G) = \left\{ {\rho (g):g \in G} \right\} $, and \emph{the left commutation semigroup of} $ G $, $ \Lambda (G) $, is the subsemigroup of $ \mathcal{M}(G) $ generated by the set of $ \lambda $\emph{-maps},  $ {\Lambda _1}(G) = \left\{ {\lambda (g):g \in G} \right\} $. If $ G $ is abelian the commutation semigroups are trivial semigroups consisting of one mapping sending each element of $ G $ to the identity element. We will study $ {\rm P}(G) $ and $ \Lambda (G) $ only when $ G $ is non-abelian. 

In this paper we will discuss the commutation semigroups of dihedral groups. The dihedral group of order $ 2m $ has presentation
\[
{D_m} = \left\langle {a,b;{a^m} = 1,{b^2} = 1,{a^b} = {a^{ - 1}}} \right\rangle ,
\]
where the conjugate of $ a $ by $ b $ is denoted $ {a^b} = {b^{ - 1}}ab $. Each element of $ {D_m} $ can be written uniquely in the form $ {a^i}{b^j} $ with $ i \in {\mathbb{Z}_m} $ and $ j \in {\mathbb{Z}_2} $. Since $ {D_3} $ is the smallest non-abelian dihedral group, we will assume, henceforth, that $ m \geq 3 $. Our primary goal is to develop explicit formulas for the orders of $ {\rm P}(G) $ and $ \Lambda (G) $.

In the mid-1960s B.H. Neumann pointed out to N.D. Gupta (oral communication) that $ \left| {{\rm P}({D_3})} \right| = 6 $ but $ \left| {\Lambda ({D_3})} \right| = 9 $. One might have thought, at first glance, that the left and right commutator semigroups would be isomorphic. However, the smallest nonabelian group yields a counterexample and this raised the question of how these two semigroups are related.  In \cite{gupta66}, Gupta characterized those dihedral groups for which $ {\rm P}(G) $ and $ \Lambda (G) $ are isomorphic. He then went on to study the question of isomorphism for nilpotent groups, finding that, for groups of class 2, 3, and 4, one has $ {\rm P}(G) = \Lambda (G) $, $ {\rm P}(G) \cong \Lambda (G) $, and $ \left| {{\rm P}(G)} \right| = \left| {\Lambda (G)} \right| $, respectively. He then gave an example of a class 5 group for which the commutation semigroups are not isomorphic. The question of whether $ {\rm P}(G) \cong \Lambda (G) $ for class 4 groups is still open.

In the mid-1960’s the Neumanns were making significant contributions to variety theory and it is reasonable to suppose that the purpose of the study of commutation semigroups was to further the understanding of the varieties of the groups with which they are associated. By interpreting the group multiplication as (noncommutative) addition, Gupta \cite{gupta67} also studied these semigroups as the multiplicative structures of commutation near rings. 

In 1970 James Countryman \cite{countryman70} wrote his Ph.D. thesis at the University of Notre Dame on the commutation semigroups of $pq$ groups for $p$ and $q$ distinct primes with $ p < q $. Each nonabelian $pq$ group is a split extension of a cyclic group of order $q$ by a cyclic group of order $p$. Among his results were the following:

\begin{atheorem}{C1}
If $ G $ is a $ pq $ group the following statements are equivalent:
\begin{enumerate}[(a)]
	\item $ {\rm P}(G) = \Lambda (G), $ 
	\item $ {\rm P}(G) \cong \Lambda (G), $ 
	\item $ \left| {{\rm P}(G)} \right| = \left| {\Lambda (G)} \right|. $ 
\end{enumerate}
\end{atheorem}

\begin{atheorem}{C2}
If $ {G_1}{\text{ and }}{G_2} $ are $ pq $ groups, then $ {\rm P}({G_1}) \cong {\rm P}({G_2}) $ implies $ {G_1} \cong {G_2} $.
\end{atheorem}

\begin{atheorem}{C3}
If $ G $ is a $ 2q $ group, then $ {\rm P}(G) \subseteq \Lambda (G) $ or $ \Lambda (G) \subseteq {\rm P}(G) $, or both.
\end{atheorem}

Since the non-abelian $2q$ groups are among the dihedral groups, one might conjecture that these results hold for all dihedral groups. In the case of Theorem C1 it is clear, for any group, that $ (a) \Rightarrow (b) \Rightarrow (c) $. We will show that, for dihedral groups in general, $ (c)\not  \Rightarrow (a) $, $ (b)\not  \Rightarrow (a) $, and $ (c)\not  \Rightarrow (b) $ On the positive side, we will derive Theorem \ref{t23}, a left commutation semigroup version of Theorem C2. We note as well that Theorem C2, our Theorem \ref{t23}, and Theorem C3 do not hold for dihedral groups in general.

The difficulty in identifying the elements of $ {\rm P}(G) $ and $ \Lambda (G) $ is that, although the generating sets $ {{\rm P}_1}(G){\text{ and }}{\Lambda _1}(G) $ are clearly defined, these generators must be multiplied together (composed) repeatedly to form the full set of mappings in $ {\rm P}(G) $ and $ \Lambda (G) $. We will develop a method which gives  more control over this process than has been possible previously. The method evolved as joint work while the second author supervised the Honours Theses of the first and third authors at Mount Saint Vincent University. Their work was complementary, each author giving formulas for the orders of $ {\rm P}({D_m}){\text{ and }}\Lambda ({D_m}) $; Levy for $ m $ odd or $m$ a power of 2 and DeWolf for $m$ even. 

\begin{atheorem}{L1 (Levy \cite{levy08})}
If $m$ is odd, then
\begin{enumerate}[(i)]
	\item $ \left| {{\rm P}({D_m})} \right| = m(in{d_R}(m) + 1) $, where $ in{d_R}(m) = \min \left\{ {i \in {\mathbb{Z}^ + }:{{( - 2)}^i} \equiv 1\;(\bmod m)} \right\}, $
	\item $ \left| {\Lambda ({D_m})} \right| = m(in{d_L}(m) + 1) $, where $ in{d_L}(m) = \min \left\{ {i \in {\mathbb{Z}^ + }:{2^i} \equiv 1\;(\bmod m)} \right\} .$
\end{enumerate}
\end{atheorem}

\begin{atheorem}{L2 (Levy \cite{levy08}) }
If $ m = {2^\ell } $, then $ \left| {{\rm P}({D_m})} \right| = \left| {\Lambda ({D_m})} \right| = {2^\ell } + {2^{\ell  - 1}} - 2 $.
\end{atheorem}

\begin{atheorem}{D (DeWolf \cite{dewolf12})}
If $ m = {2^\ell }n $ with $ {\text{n odd}} $ and $ \ell ,n \in {\mathbb{Z}^ + } $,then
\begin{enumerate}[(i)]
	\item $ \left| {{\rm P}({D_m})} \right| = n({2^\ell } + {2^{\ell  - 1}} - 2 + {m_R} - \ell ) $, 
where \\ $ {m_R} = \min \left\{ {i > \ell :{{( - 2)}^i} \equiv {{( - 2)}^\ell }\;(\bmod m)} \right\}\;{\text{if n}} > 1 $, and  $ {m_R} = \ell {\text{ if n}} = 1 ,$
	\item $ \left| {\Lambda ({D_m})} \right| = n({2^\ell } + {2^{\ell  - 1}} - 2 + {m_L} - \ell ) $, where \\ $ {m_L} = \min \left\{ {i > \ell :{2^i} \equiv {2^\ell }\;(\bmod m)} \right\}\;{\text{if n}} > 1 $, and
              $ {m_L}{\kern 1pt}  = \ell {\text{ if n}} = 1 $.
\end{enumerate}
\end{atheorem}

These formulas were used to generate the table at the end of this paper giving orders of the left and right commutation semigroups for $ {D_m} $  $ \left( {3 \leq m \leq 101} \right) $. The formulas in Theorems L2 and D do have some  similarities, but it is unsatisfying to have such different looking formulas for the various types of dihedral groups. It is possible, however, to give one formula each for the orders of $ {\rm P}({D_m}){\text{ and }}\Lambda ({D_m}) $. This formula involves the orders of the terms of the upper central series of $ {D_m} $, thereby hinting at the value of a more group theoretical approach to the questions surrounding commutation semigroups. We will prove the following formulas and derive the results of Levy and DeWolf from them.

\begin{atheorem}{21}
If $ m = {2^\ell }n > 3 $ with $ n $ odd,
\begin{enumerate}[(i)]
	\item $
	\left| {{\rm P}({D_m})} \right| = m\left( {\dfrac{1}{{\left| {{Z_1}} \right|}} + \sum\limits_{i = 1}^{t - 1} {\dfrac{1}{{\left| {{Z_i}} \right|}}} } \right)\mbox{, where }    
	t = \left\{ {
	\begin{array}{*{20}{c}}
 		{1 + or{d_m}( - 2){\text{ for }}\ell  = 0,\,n > 1} \\ 
  		{\ell  + pe{r_m}( - 2){\text{ for }}\ell  > 0,\,n > 1} \\ 
  		{\ell {\text{                   for }}\ell  > 0,\,n = 1} 
	\end{array}} \right.  ,$
	\item $ 
	 \left| {\Lambda ({D_m})} \right| = m\left( {\dfrac{1}{{\left| {{Z_1}} \right|}} + \sum\limits_{i = 1}^{t' - 1} {\dfrac{1}{{\left| {{Z_i}} \right|}}} } \right)\mbox{, where }     
	t' = \left\{ {
	 \begin{array}{*{20}{c}}
  		{1 + or{d_m}(2){\text{ for }}\ell  = 0,\,n > 1} \\ 
  		{\ell  + pe{r_m}(2){\text{ for }}\ell  > 0,\,n > 1} \\ 
 		{\ell {\text{                   for }}\ell  > 0,\,n = 1} 
	\end{array}} \right. , $
	
\end{enumerate}
where $ {Z_i} $ is the $ {i^{th}} $ centre of $ {D_m} $, that is, the $ {i^{th}} $ term of the upper central series.
\end{atheorem}

In Section 2 we develop the initial ideas and notation used throughout the paper. Section 3 introduces containers, a method of binding the commutation mappings together into natural units. These containers actually form a quotient semigroup of a natural subsemigroup of $ \mathcal{M}(G) $ implicit in Gupta \cite{gupta66}. Rather than working with individual mappings, we will show how these containers can be multiplied repeatedly to generate $ {\rm P}({D_m}){\text{ and }}\Lambda ({D_m}) $. In Section 4 we calculate the cardinality of each container in terms of the upper central series. In Section 5 we state and prove our main theorem and give two applications. In Section 6 we show how to use our methods to derive the formulas of Levy and DeWolf and discuss possible generalizations of Countryman's theorems to all dihedral groups. It is hoped that this will show the merit of our approach to the reader.

\section{Preliminaries}
The fundamental concepts developed in this section are parallel to those developed by N.D. Gupta in \cite{gupta66}. Many of these ideas are explicitly or implicitly his.

The following commutator identities are easily verified by expansion.
\begin{atheorem}{CI}
If G is any group and $ x,y,z \in G $ then
\begin{enumerate}[(i)]
	\item $ {x^y} = x[x,y] $, 	
	\item $ [y,x] = {[x,y]^{ - 1}} $,
	\item $ [xy,z] = {[x,z]^y}[y,z] = [x,z][x,z,y][y,z] $, 
	\item $ [x,yz] = [x,z]{[x,y]^z} = [x,z][x,y][x,y,z] $,	
	\item $ [{x^{ - 1}},y] = {[x,y]^{ - {x^{ - 1}}}} $.\qed
\end{enumerate} 
\end{atheorem}
\begin{note}
For any group $G$ and $a,b\in G,$ we denote both $(a^{-1})^b$ and $(a^{-1})^b$ by $a^{-b}.$ This is unambiguous since $(a^{-1})^b = b^{-1}a^{-1}b = (b^{-1}ab)^{-1} = (a^b)^{-1}.$
\end{note}
\begin{notation}
For each $ s \geq 0 $ let $ {\alpha _s} = {( - 1)^s} $ and $ {\beta _s} = {( - 1)^s} - 1 $. Since the values of $ {\alpha _s}{\text{ and }}{\beta _s} $ are unique up to parity, it will cause no ambiguity to view s as an element of $ {\mathbb{Z}_2} $.
\end{notation}

We begin by calculating an explicit formula for each $ \rho $- and $ \lambda $-map.

\begin{lemma}
\label{l1}
Let $ {D_m} $ be the dihedral group with presentation as above. For each $ i,r \in {\mathbb{Z}_m} $ and $ j,s \in {\mathbb{Z}_2}:$
\[
	({a^i}{b^j})\rho ({a^r}{b^s}) = {a^{{N_\rho }}} \mbox{ and } ({a^i}{b^j})\lambda ({a^r}{b^s}) = {a^{{N_\lambda }}},
\]
where $ {N_\rho } \equiv i{\alpha _j}{\beta _s} - r{\alpha _s}{\beta _j} \equiv ( - 2){\alpha _{js}}(is - jr)(\bmod m) $ and $ {N_\lambda } \equiv  - {N_\rho } \equiv 2{\alpha _{js}}(is - jr)(\bmod m) $.
\end{lemma}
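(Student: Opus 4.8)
The plan is to reduce everything to a single normal-form multiplication rule and then read off the exponent of $a$ in the relevant commutator. First I would record the basic relation $ba = a^{-1}b$, which follows from $a^b = a^{-1}$ together with $b^2 = 1$, and iterate it to obtain $b^j a^r = a^{\alpha_j r} b^j$ for $j \in \mathbb{Z}_2$. This immediately yields the multiplication rule
\[
(a^i b^j)(a^r b^s) = a^{\,i + \alpha_j r}\, b^{\,j+s},
\]
and, by solving $(a^i b^j)(a^{i'} b^{j'}) = 1$, the inverse formula $(a^i b^j)^{-1} = a^{-\alpha_j i} b^j$. Since $D_m/\langle a\rangle \cong \mathbb{Z}_2$ is abelian, every commutator already lies in $\langle a\rangle$, so I expect the $b$-exponent to cancel and the outcome to have the asserted shape $a^{N}$; this serves as a useful sanity check on the computation.

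Next I would expand $(a^i b^j)\rho(a^r b^s) = [a^i b^j,\, a^r b^s] = (a^i b^j)^{-1}(a^r b^s)^{-1}(a^i b^j)(a^r b^s)$ by applying the multiplication and inverse formulas repeatedly. Collecting the exponent of $a$ and using $\alpha_j \alpha_s = \alpha_{j+s}$ along the way, the $b$-part does vanish (as anticipated), and after substituting $\beta_s = \alpha_s - 1$ the surviving $a$-exponent simplifies to
\[
N_\rho = i\alpha_j\beta_s - r\alpha_s\beta_j,
\]
which is the first of the two claimed expressions.

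It then remains to identify this with the factored form $(-2)\alpha_{js}(is - jr)$. Because $j$ and $s$ range only over $\mathbb{Z}_2$, the cleanest route is a four-case check over $(j,s)\in\{0,1\}^2$: in each case I evaluate $\alpha_j,\alpha_s,\beta_j,\beta_s$ and $\alpha_{js} = (-1)^{js}$ and confirm that $i\alpha_j\beta_s - r\alpha_s\beta_j$ and $(-2)\alpha_{js}(is-jr)$ agree as integers, hence modulo $m$. For example, when $j=s=1$ both reduce to $2i-2r$, whereas if either of $j,s$ is $0$ one of the two terms drops out. Finally the left-commutation statement is immediate from CI(ii): since $(a^i b^j)\lambda(a^r b^s) = [a^r b^s,\, a^i b^j] = [a^i b^j,\, a^r b^s]^{-1} = (a^{N_\rho})^{-1} = a^{-N_\rho}$, we obtain $N_\lambda = -N_\rho \equiv 2\alpha_{js}(is - jr)\ (\bmod\, m)$, as required.

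The step I expect to demand the most care is not conceptual but bookkeeping: keeping the $a$-exponents reduced modulo $m$ and the $b$-exponents modulo $2$ straight through the four-fold product, and correctly reading $\alpha_{js}$ as $(-1)^{js}$ with $js$ the product in $\mathbb{Z}_2$ (so that $\alpha_{js} = -1$ exactly when $j=s=1$), rather than as $\alpha_{j+s}$. Once the normal form is in hand there is no further obstruction.
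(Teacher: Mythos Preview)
Your proposal is correct and complete; the only difference from the paper is the mechanics of the expansion. The paper breaks the commutator apart using the identities CI(iii) and CI(iv), writing
\[
[a^ib^j,a^rb^s]=[a^i,a^rb^s]^{b^j}[b^j,a^rb^s]
\]
and then further decomposing each factor until only expressions of the form $(a^k)^{b^t}$ remain, whereas you set up the normal-form multiplication $(a^ib^j)(a^rb^s)=a^{\,i+\alpha_jr}b^{\,j+s}$ and the inverse $(a^ib^j)^{-1}=a^{-\alpha_ji}b^{j}$ once, and then compute the four-fold product directly. Both routes land on $N_\rho=i\alpha_j\beta_s-r\alpha_s\beta_j$, both verify the factored form $(-2)\alpha_{js}(is-jr)$ by the same four-case check over $(j,s)\in\mathbb{Z}_2^2$, and both deduce the $\lambda$-statement from CI(ii). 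Your approach is slightly more self-contained (it needs only CI(ii), not CI(iii)--(v)), while the paper's approach rehearses the commutator calculus that is reused later. Your caution about reading $\alpha_{js}$ as $(-1)^{js}$ rather than $\alpha_{j+s}$ is exactly the right thing to flag.
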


\begin{proof}
Applying CI \emph{(ii)}, CI \emph{(iii)}, and CI \emph{(iv)} we have,
\begin{align*}
({a^i}{b^j})\rho ({a^r}{b^s}) &= [{a^i}{b^j},{a^r}{b^s}] = {[{a^i},{a^r}{b^s}]^{{b^j}}}[{b^j},{a^r}{b^s}] \\
	&= \left( {{{[{a^i},{b^s}]}^{{b^j}}}{{[{a^i},{a^r}]}^{{b^s}{b^j}}}} \right)\left( {[{b^j},{b^s}]{{[{b^j},{a^r}]}^{{b^s}}}} \right) \\
	&= {[{a^i},{b^s}]^{{b^j}}}{[{b^j},{a^r}]^{{b^s}}} 
	  = {[{a^i},{b^s}]^{{{( - 1)}^j}}}\left( {{{[{a^r},{b^j}]}^{ - {{( - 1)}^s}}}} \right) \\ 
  	&= {\left( {{a^{ - i}}{{({a^i})}^{{b^s}}}} \right)^{{{( - 1)}^j}}}{\left( {{a^{ - r}}{{({a^r})}^{{b^j}}}} \right)^{ - {{( - 1)}^s}}} \\
	&= {\left( {{a^{ - i}}{{({a^i})}^{{{( - 1)}^s}}}} \right)^{{{( - 1)}^j}}}{\left( {{a^{ - r}}{{({a^r})}^{{{( - 1)}^j}}}} \right)^{ - {{( - 1)}^s}}} \\ 
	&= {a^{i( - 1 + {{( - 1)}^s}){{( - 1)}^j} - r( - 1 + {{( - 1)}^j}){{( - 1)}^s}}} = {a^{{N_\rho }}},
\end{align*}
where $ {N_\rho } \equiv i{\beta _s}{\alpha _j} - r{\beta _j}{\alpha _s} $, as required. Checking each of the four cases for $ (j,s) \in \mathbb{Z}_2^2 $, we also see that $ {N_\rho } \equiv ( - 2){\alpha _{js}}(is - jr)(\bmod m) $. The first part of the lemma can be used to prove the second part.
\[
	({a^i}{b^j})\lambda ({a^r}{b^s}) = [{a^r}{b^s},{a^i}{b^j}] = ({a^r}{b^s})\rho ({a^i}{b^j}) = {a^{r{\beta _j}{\alpha _s} - i{\beta _s}{\alpha _j}}} = {a^{ - {N_\rho }}} = {a^{{N_\lambda }}}. \qedhere
\]
\end{proof}

\begin{definition}
For each pair $ (A,B) \in {\mathbb{Z}_m} \times {\mathbb{Z}_m} $ we define a $ \mu $-map $ \mu (A,B):{D_m} \to {D_m} $ by
\[
({a^i}{b^j})\mu (A,B) = {a^{{N_\mu }}}\mbox{, where }{N_\mu } = Ai{\alpha _j} - B{\beta _j}.
\]
\end{definition}

Each $ \rho $- and $ \lambda $-map can be identified as one of these $ \mu $-maps.
\begin{lemma}
\label{l2}
For each $ r \in {\mathbb{Z}_m} $ and $ s \in {\mathbb{Z}_2} $,
\begin{enumerate}[(i)] 
	\item $ \rho ({a^r}{b^s}) = \mu ({\beta _s},\;r{\alpha _s}) ,$
	\item $ \lambda ({a^r}{b^s}) = \mu ( - {\beta _s}, - r{\alpha _s}). $
\end{enumerate}
\end{lemma}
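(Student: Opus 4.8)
The plan is to verify the two claimed identities by direct comparison of exponent formulas, using the fact that two self-maps of $D_m$ coincide exactly when they agree on each element $a^i b^j$. Since every element of $D_m$ is written uniquely in the normal form $a^i b^j$ with $i \in \mathbb{Z}_m$, $j \in \mathbb{Z}_2$, and since Lemma \ref{l1} shows that every $\rho$- and $\lambda$-map carries $a^i b^j$ into the cyclic subgroup $\langle a \rangle$ — as does every $\mu$-map by its definition — the equality of two such maps reduces to the congruence of the corresponding exponents of $a$ modulo $m$, for all $i$ and $j$.

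For part (i), I would substitute $A = \beta_s$ and $B = r\alpha_s$ into the defining formula $N_\mu = A i \alpha_j - B \beta_j$ of the $\mu$-map, obtaining $N_\mu = \beta_s\, i \alpha_j - r\alpha_s\, \beta_j = i\alpha_j\beta_s - r\alpha_s\beta_j$. This is term-for-term identical to the exponent $N_\rho \equiv i\alpha_j\beta_s - r\alpha_s\beta_j$ computed in Lemma \ref{l1}. Hence $(a^i b^j)\mu(\beta_s, r\alpha_s) = a^{N_\mu} = a^{N_\rho} = (a^i b^j)\rho(a^r b^s)$ for every $i$ and $j$, which yields $\rho(a^r b^s) = \mu(\beta_s, r\alpha_s)$.

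For part (ii), I would proceed in the same way with $A = -\beta_s$ and $B = -r\alpha_s$, so that $N_\mu = -\beta_s\, i\alpha_j + r\alpha_s\, \beta_j = -(i\alpha_j\beta_s - r\alpha_s\beta_j) = -N_\rho$. By Lemma \ref{l1} this equals $N_\lambda$, so the maps $\mu(-\beta_s, -r\alpha_s)$ and $\lambda(a^r b^s)$ agree on every $a^i b^j$, establishing the claim.

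There is essentially no genuine obstacle here: the lemma is a bookkeeping identification, and the only point requiring care is to read off the correct coefficients $A$ and $B$ by matching the $\mu$-template $A i\alpha_j - B\beta_j$ against the $\rho$-exponent, keeping in mind that the parity-dependent symbols $\alpha_j, \beta_j, \alpha_s, \beta_s$ may be treated as ordinary integers inside the congruence modulo $m$. Once the coefficients are extracted correctly, both identities follow at once.
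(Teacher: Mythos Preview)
Your proposal is correct and follows essentially the same approach as the paper: compute the exponent of $a$ produced by the $\mu$-map with the indicated parameters, compare it against the exponent $N_\rho$ (respectively $N_\lambda$) from Lemma~\ref{l1}, and observe that they agree for all $i,j$. The paper's proof is slightly more terse but performs exactly this substitution and comparison.
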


\begin{proof}
$ ({a^i}{b^j})\mu ({\beta _s},\;r{\alpha _s}) = {a^N} $, where $ N \equiv ({\beta _s})i{\alpha _j} - (r{\alpha _s}){\beta _j} $. By Lemma \ref{l1} $ ({a^i}{b^j})\rho ({a^r}{b^s}) = {a^{{N_\rho }}} $ with
 $ {N_\rho } \equiv i{\alpha _j}{\beta _s} - r{\alpha _s}{\beta _j}(\bmod m) $. Since $ N \equiv {N_\rho } $, \emph{(i)} follows. To prove \emph{(ii)}, consider $ ({a^i}{b^j})\mu ( - {\beta _s}, - r{\alpha _s}) = {a^{N'}} $ where $ N' \equiv ( - {\beta _s})i{\alpha _j} - (r( - {\alpha _s})){\beta _j} \equiv r{\alpha _s}{\beta _j} - {\beta _s}i{\alpha _j} $. By Lemma \ref{l1} $ ({a^i}{b^j})\lambda ({a^r}{b^s}) = {a^{r{\beta _j}{\alpha _s} - i{\beta _s}{\alpha _j}}} $, thus \emph{(ii)} is established. 
\end{proof}

\begin{lemma}
\label{l3}
For each $ A,A' \in {\mathbb{Z}_m} $ and $ B,B' \in {\mathbb{Z}_2} $, $ \mu (A,B) \circ \mu (A',B') = \mu (AA',BA') $.
\end{lemma}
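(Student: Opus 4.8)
The plan is to apply the definition of the $\mu$-map twice and to track how the $b$-exponent behaves under composition. The essential observation, which is what makes the composite collapse to a single $\mu$-map, is that every $\mu$-map lands in the cyclic subgroup $\langle a\rangle$: by definition $(a^i b^j)\mu(A,B) = a^{N_\mu}$ has $b$-exponent $0$. Consequently the second map is always evaluated at an element whose $b$-exponent is $0$, and since $\alpha_0 = (-1)^0 = 1$ and $\beta_0 = (-1)^0 - 1 = 0$, the second evaluation simplifies dramatically.

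Since the maps are written on the right, I read $\mu(A,B)\circ\mu(A',B')$ as ``first apply $\mu(A,B)$, then $\mu(A',B')$.'' First I would compute, for an arbitrary element $a^i b^j$, the value $(a^i b^j)\mu(A,B) = a^{N}$ with $N = Ai\alpha_j - B\beta_j$, straight from the definition. I would then feed this output, which is the pure power $a^N = a^N b^0$, into $\mu(A',B')$. Applying the definition again with the $b$-exponent now equal to $0$, and using $\alpha_0 = 1$ and $\beta_0 = 0$, gives
\[
(a^N)\mu(A',B') = a^{A'N\alpha_0 - B'\beta_0} = a^{A'N}.
\]

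Finally I would substitute $N = Ai\alpha_j - B\beta_j$ and use commutativity of multiplication in $\mathbb{Z}_m$ to obtain the exponent $A'(Ai\alpha_j - B\beta_j) = AA'\,i\alpha_j - BA'\,\beta_j$, and compare it with the exponent produced by applying $\mu(AA',BA')$ directly, namely $(AA')i\alpha_j - (BA')\beta_j$. These agree for every $i\in\mathbb{Z}_m$ and $j\in\mathbb{Z}_2$, so the two maps are equal, which is exactly the claimed identity. I expect no serious obstacle: the computation is routine once the two delicate points are handled, namely fixing the composition convention (the order in which the maps act, which is forced by the stated right-hand form $\mu(AA',BA')$ rather than $\mu(AA',AB')$) and recognizing that the intermediate element has trivial $b$-part, which is precisely why the second parameters combine as $BA'$ and not symmetrically.
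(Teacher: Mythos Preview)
Your proof is correct and follows essentially the same route as the paper's own argument: compute $(a^ib^j)\mu(A,B)=a^N$, observe that the output lies in $\langle a\rangle$ so that $\alpha_0=1$ and $\beta_0=0$ kill the $B'$-term in the second application, and then compare the resulting exponent $A'N=AA'i\alpha_j-BA'\beta_j$ with that of $\mu(AA',BA')$. Your added remarks on the composition convention and on why the second coordinates combine as $BA'$ rather than symmetrically are helpful commentary but not a departure from the paper's method.
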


\begin{proof}
First note that $ ({a^i}{b^j})\mu (A,B) = {a^N} $, where $ N \equiv Ai{\alpha_j} - B{\beta _j} $. Thus
\[
({a^i}{b^j})(\mu (A,B) \circ \mu (A',B')) = ({a^N}{b^0})\mu (A',B') = {a^{N'}},
\]
where $ N' \equiv A'N{\alpha _0} - B'{\beta _0} \equiv A'N \equiv (Ai{\alpha _j} - B{\beta _j})A' \equiv AA'i{\alpha _j} - BA'{\beta _j} $.
On the other hand, $ ({a^i}{b^j})\mu (AA',BA') = {a^{N''}} $, where $ N'' \equiv AA'i{\alpha _j} - BA'{\beta _j} $. The lemma follows.
\end{proof}

Although compositions of $ \rho $-maps ( $ \lambda $-maps) may not be $ \rho $-maps ( $ \lambda $-maps), it follows from Lemma \ref{l3} that compositions of $ \mu $-maps are $ \mu $-maps. Closing the sets $ {{\rm P}_1}({D_m}) $ and $ \Lambda_1 ({D_m}) $ under multiplication to form semigroups will be facilitated by identifying the $ \rho $-maps and $ \lambda $-maps as $ \mu $-maps. Note also that the $ \mu $-maps form a subsemigroup, $ {\rm M}({D_m}) $, of $ \mathcal{M}({D_m}) $, and since $ {{\rm P}_1}({D_m}),{\Lambda _1}({D_m}) \subseteq M({D_m}) $, we have $ {\rm P}({D_m}),\Lambda ({D_m}) $ are subsemigroups of $ {\rm M}({D_m}) .$ Thus $ \left| {{\rm M}({D_m})} \right| = {m^2} $ is an upper bound on the orders of both $ {\rm P}({D_m}) $ and $ \Lambda ({D_m}) $. 

\section{Containers}
Viewed as $ \mu $-maps, the mappings in $ {\rm P}({D_m}) $ and $ \Lambda ({D_m}) $ naturally bond together into sets with one parameter running over $ {\mathbb{Z}_m} $.

\begin{definition}
If $ A,B \in {\mathbb{Z}_m} $, the \emph{$ (A,B) $-container} is defined as 
\[
	 \mathcal{C}(A,B) = \left\{ {\mu (A,xB):x \in {\mathbb{Z}_m}} \right\}.
\]
\end{definition}

For $ r \in {\mathbb{Z}_m} $ and $ s \in {\mathbb{Z}_2} $, Lemma \ref{l2} implies that $ \rho ({a^r}{b^s}) = \mu ({\beta _s},r{\alpha _s}) \in \mathcal{C}({\beta _s},r{\alpha _s}) $. Since $ r $ is a parameter running through $ {\mathbb{Z}_m} $, we can simplify this by writing $ \rho ({a^r}{b^s}) \in \mathcal{C}({\beta _s},{\alpha _s}) $. Similarly, we can write $ \lambda ({a^r}{b^s}) \in \mathcal{C}( - {\beta _s}, - {\alpha _s}) $. Thus our generating maps are in easily identified containers.

\begin{lemma}
\label{l4}
If $ A,B,B' \in {\mathbb{Z}_m} $ and $B$ is invertible in $ {\mathbb{Z}_m} $, then $ \mathcal{C}(A,BB') = \mathcal{C}(A,B'). $ 
\end{lemma}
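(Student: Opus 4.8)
The plan is to prove the set equality $\mathcal{C}(A,BB') = \mathcal{C}(A,B')$ directly from the definition by reindexing the running parameter through the bijection of $\mathbb{Z}_m$ induced by multiplication by the unit $B$. Unfolding the definitions gives $\mathcal{C}(A,BB') = \{\mu(A,x(BB')) : x \in \mathbb{Z}_m\}$ and $\mathcal{C}(A,B') = \{\mu(A,yB') : y \in \mathbb{Z}_m\}$, so the whole statement is really a claim about when two such indexed families of $\mu$-maps sweep out the same set. The key observation is that, since $B$ is invertible in $\mathbb{Z}_m$, the map $x \mapsto xB$ is a bijection of $\mathbb{Z}_m$ onto itself; in particular $\{xB : x \in \mathbb{Z}_m\} = \mathbb{Z}_m$. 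Rewriting $x(BB') = (xB)B'$ and substituting $y = xB$ then identifies the two families.

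Concretely, I would check the two inclusions. For $\mathcal{C}(A,BB') \subseteq \mathcal{C}(A,B')$, an arbitrary element $\mu(A,x(BB'))$ equals $\mu(A,(xB)B')$, and setting $y = xB$ exhibits it as $\mu(A,yB') \in \mathcal{C}(A,B')$; note this inclusion uses no hypothesis on $B$. For the reverse inclusion $\mathcal{C}(A,B') \subseteq \mathcal{C}(A,BB')$, given $\mu(A,yB')$ I would invoke invertibility to choose $x = yB^{-1} \in \mathbb{Z}_m$, so that $xB = y$ and hence $\mu(A,yB') = \mu(A,(xB)B') = \mu(A,x(BB')) \in \mathcal{C}(A,BB')$. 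This is the only place where invertibility of $B$ enters, and it is exactly what guarantees that $x \mapsto xB$ is surjective.

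There is no genuine obstacle here: the lemma reduces to the elementary fact that multiplying the index set $\mathbb{Z}_m$ by a unit merely permutes it. The one point deserving a word of care is that the running parameter $x$ in the definition of a container is absorbed into the second argument, so that $\mathcal{C}(A,c)$ depends on $c$ only through the set $\{xc : x \in \mathbb{Z}_m\} = c\mathbb{Z}_m$. The content of the lemma is then simply that $(BB')\mathbb{Z}_m = B'(B\mathbb{Z}_m) = B'\mathbb{Z}_m$ whenever $B$ is a unit, which is the conceptual reason two second arguments differing by a unit yield the same container.
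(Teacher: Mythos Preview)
Your proof is correct and follows essentially the same approach as the paper: both arguments rest on the observation that multiplication by the unit $B$ permutes $\mathbb{Z}_m$, so $\{xB : x \in \mathbb{Z}_m\} = \mathbb{Z}_m$ and hence the two indexed families of $\mu$-maps coincide. The paper states this set equality directly and reindexes in one line, whereas you unpack it into two inclusions, but the content is identical.
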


\begin{proof}
Since $B$ is invertible in $\mathbb{Z}_m,$ $\mathbb{Z}_mB = \mathbb{Z}_m.$ That is, 
\[ \{0B,1B,\ldots, (m-1)B \} = \{ 0,1,\ldots, m-1\}.\]
We then have that
\begin{align*}
\mathcal{C}(A,BB') &= \{\mu(A,0B\cdot B'), \mu(A,1B\cdot B'),\ldots, \mu(A,(m-1)B')\} \\
&= \{ \mu(A,0B'),\mu(A,1B'),\ldots, \mu(A,(m-1)B')\} = \mathcal{C}(A,B'). \qedhere
\end{align*}
\end{proof}

\begin{lemma}
\label{l5}
If $ A,B,C \in {\mathbb{Z}_m} $, then $ \mathcal{C}(A,BC) \subseteq \mathcal{C}(A,C) $.
\end{lemma}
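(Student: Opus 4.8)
The plan is to prove the inclusion directly from the definition of a container, using nothing more than associativity of multiplication in $\mathbb{Z}_m$. Recall that $\mathcal{C}(A,BC) = \{\mu(A,x(BC)) : x \in \mathbb{Z}_m\}$, so a typical element is $\mu(A,x(BC))$ for some fixed $x \in \mathbb{Z}_m$. The goal is to exhibit this same map as $\mu(A,yC)$ for a suitable $y \in \mathbb{Z}_m$, since such maps are exactly the elements of $\mathcal{C}(A,C)$.

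The key step is the reindexing. First I would rewrite the second argument using associativity of the ring $\mathbb{Z}_m$, namely $x(BC) = (xB)C$. Then I set $y = xB$, observing that $y \in \mathbb{Z}_m$ because $\mathbb{Z}_m$ is closed under multiplication. With this choice the original element becomes $\mu(A,x(BC)) = \mu(A,yC)$, which lies in $\mathcal{C}(A,C)$ by definition. Since $x$ was an arbitrary element of $\mathbb{Z}_m$, every element of $\mathcal{C}(A,BC)$ is accounted for, giving $\mathcal{C}(A,BC) \subseteq \mathcal{C}(A,C)$.

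There is no genuine obstacle here; the only point to be careful about is that the substitution $y = xB$ is a well-defined map $\mathbb{Z}_m \to \mathbb{Z}_m$ rather than a bijection. This is precisely the contrast with Lemma \ref{l4}: when $B$ is invertible, $x \mapsto xB$ is a permutation of $\mathbb{Z}_m$, so the two containers coincide and one gets equality; in the present generality $xB$ need not sweep out all of $\mathbb{Z}_m$, so one can only guarantee the forward inclusion. I expect the proof to be a short two-line computation, and I would phrase it to make explicit that the inclusion may be strict in general precisely because $y = xB$ is not surjective unless $B$ is a unit.
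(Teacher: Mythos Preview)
Your proposal is correct and matches the paper's proof almost verbatim: pick $\mu(A,xBC)\in\mathcal{C}(A,BC)$, rewrite $xBC=(xB)C$ with $xB\in\mathbb{Z}_m$, and conclude $\mu\in\mathcal{C}(A,C)$. Your additional remark contrasting this with Lemma~\ref{l4} is a nice observation but is not part of the paper's proof.
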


\begin{proof}
Let $ \mu  \in \mathcal{C}(A,BC) $, then there is an $ x \in {\mathbb{Z}_m} $ with $ \mu  = \mu (A,xBC) $. Note that
\[
\mu  = \mu (A,xBC) = \mu (A,\left( {xB} \right)C)
\]
and $ xB \in {\mathbb{Z}_m} $; therefore $ \mu  \in \mathcal{C}(A,C) $.
\end{proof}

\begin{lemma}
\label{l6}
For all $ A,A',B,B' \in {\mathbb{Z}_m} $, $ \mathcal{C}(A,B) \cap \mathcal{C}(A',B') \ne \varnothing $ if and only if $ A \equiv A'(\bmod m) $.
\end{lemma}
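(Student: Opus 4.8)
The plan is to extract the first parameter $A$ from a $\mu$-map by evaluating it on a carefully chosen element, and to observe that every container with a given first coordinate shares a common ``baseline'' map determined solely by that coordinate.

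For the forward direction, suppose the intersection is nonempty, so there exist $x,y \in \mathbb{Z}_m$ with $\mu(A,xB) = \mu(A',yB')$. The idea is to evaluate this common map on an element of the form $a^i b^0$, for which the $\beta_j$-term of $N_\mu$ vanishes and only the $A$-dependence survives. Concretely, using $\alpha_0 = 1$ and $\beta_0 = 0$, I compute $(a)\mu(A,xB) = a^{A}$ and $(a)\mu(A',yB') = a^{A'}$. Since the two maps agree on $a$, we get $a^A = a^{A'}$, that is, $A \equiv A' \pmod m$.

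For the reverse direction, suppose $A \equiv A' \pmod m$. The key observation is that, taking $x = 0$ in the definition of the container, the map $\mu(A,0)$ lies in $\mathcal{C}(A,B)$ for every $B$, and likewise $\mu(A',0) \in \mathcal{C}(A',B')$. It then remains to verify that $\mu(A,0) = \mu(A',0)$: evaluating on an arbitrary $a^i b^j$ gives $(a^i b^j)\mu(A,0) = a^{Ai\alpha_j}$ and $(a^i b^j)\mu(A',0) = a^{A'i\alpha_j}$, and these coincide for all $i,j$ precisely because $A \equiv A' \pmod m$. Hence $\mu(A,0)$ is a common element and the intersection is nonempty.

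The argument is short, and there is no serious obstacle beyond choosing the right test element: evaluating on $a$ (the case $j=0$) isolates $A$ because the $B$-dependence enters only through $\beta_j$, which is zero when $j=0$. The only point requiring a moment's care is the reverse direction, where one must notice that the second coordinate can be set to $0$ uniformly across all containers sharing a fixed first coordinate, so that the common map $\mu(A,0)$ always exists.
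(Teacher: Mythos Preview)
Your proof is correct and follows essentially the same approach as the paper: both evaluate the common $\mu$-map at $a$ (using $\alpha_0=1$, $\beta_0=0$) to extract $A\equiv A'$ in the forward direction, and both exhibit $\mu(A,0)=\mu(A',0)$ as the common element in the reverse direction. Your write-up is slightly more explicit in verifying $\mu(A,0)=\mu(A',0)$ pointwise, whereas the paper simply identifies $A$ with $A'$ in $\mathbb{Z}_m$, but the argument is the same.
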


\begin{proof}
First suppose that $ \mathcal{C}(A,B) \cap \mathcal{C}(A',B') \ne \varnothing $. It follows that there is a mapping $ \mu  \in \mathcal{C}(A,B) $ with $ \mu  \in \mathcal{C}(A',B') $ as well. We then conclude that there exist $ x,y \in {\mathbb{Z}_m} $ such that $ \mu  = \mu (A,xB) = \mu (A',yB') $.
Thus $ (a)\mu (A,xB) = {a^{Ai{\alpha _j}}} = {a^{A(1)(1)}} = {a^A} $ and this must equal $ (a)\mu (A',yB') = {a^{A'}} $. Thus it follows that $ A \equiv A'\;(\bmod m) $. Conversely, suppose that $ A \equiv A'(\bmod m) $. Then note that $ \mu (A,0) = \mu (A,0B) \in \mathcal{C}(A,B) $, and $ \mu (A,0) = \mu (A,0B') \in \mathcal{C}(A,B') = \mathcal{C}(A',B') $ since $ A \equiv A' $. Therefore $ \mu (A,0) \in \mathcal{C}(A,B) \cap \mathcal{C}(A',B') $ and the intersection is nonempty.
\end{proof}

We can represent all $ \rho $-maps and $ \lambda $-maps as disjoint unions of containers as follows.

\begin{lemma}
\label{l7}
$ {{\rm P}_1} = \mathcal{C}(0,1)\,\dot  \cup \,\mathcal{C}( - 2,1) $ and $ {\Lambda _1} = \mathcal{C}(0,1)\,\dot  \cup \,\mathcal{C}(2,1) $.
\end{lemma}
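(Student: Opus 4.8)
The plan is to translate every generating map into a $ \mu $-map via Lemma \ref{l2}, partition the generators according to the parity $ s \in \mathbb{Z}_2 $, and then recognize each of the two resulting families as one of the named containers. The key computational input is the explicit evaluation of the scalars from the Notation: $ \alpha_0 = 1,\ \beta_0 = 0 $ and $ \alpha_1 = -1,\ \beta_1 = -2 $. The only genuine subtlety is that a container carries a free parameter $ x $ ranging over all of $ \mathbb{Z}_m $, so I must check that as $ r $ ranges over $ \mathbb{Z}_m $ the second argument of the generator sweeps out the \emph{entire} container; this is where the observation that $ r \mapsto -r $ is a bijection of $ \mathbb{Z}_m $ does the work.

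First I would treat $ {\rm P}_1 $. By definition $ {\rm P}_1 = \{\rho(a^r b^s) : r \in \mathbb{Z}_m,\, s \in \mathbb{Z}_2\} $, and Lemma \ref{l2}(i) gives $ \rho(a^r b^s) = \mu(\beta_s, r\alpha_s) $. For $ s = 0 $ this is $ \mu(0, r) $, so the $ s=0 $ generators form exactly $ \{\mu(0,x) : x \in \mathbb{Z}_m\} = \mathcal{C}(0,1) $. For $ s = 1 $ this is $ \mu(-2, -r) $; since $ r \mapsto -r $ permutes $ \mathbb{Z}_m $, these generators form exactly $ \{\mu(-2,x) : x \in \mathbb{Z}_m\} = \mathcal{C}(-2,1) $. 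Hence $ {\rm P}_1 = \mathcal{C}(0,1) \cup \mathcal{C}(-2,1) $. The argument for $ \Lambda_1 $ is entirely parallel, using Lemma \ref{l2}(ii): the $ s=0 $ generators are $ \mu(0,-r) $, again sweeping out $ \mathcal{C}(0,1) $, and the $ s=1 $ generators are $ \mu(2,r) $, sweeping out $ \mathcal{C}(2,1) $, giving $ \Lambda_1 = \mathcal{C}(0,1) \cup \mathcal{C}(2,1) $.

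It remains to upgrade these unions to \emph{disjoint} unions, and for this I would invoke Lemma \ref{l6}: the containers $ \mathcal{C}(A,B) $ and $ \mathcal{C}(A',B') $ meet if and only if $ A \equiv A' \pmod m $. In the case of $ {\rm P}_1 $ the first arguments are $ 0 $ and $ -2 $, and for $ \Lambda_1 $ they are $ 0 $ and $ 2 $; in either case the containers are disjoint precisely when $ 0 \not\equiv \pm 2 \pmod m $, that is, when $ m \nmid 2 $, which holds under the standing hypothesis $ m \geq 3 $. This is the one place the bound $ m \geq 3 $ is used, and it is the reason the disjointness fails for the degenerate abelian cases. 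I do not anticipate a real obstacle here: the entire argument is a careful piece of bookkeeping in the scalars $ \alpha_s,\beta_s $, the sign-bijection remark, and a single application of Lemma \ref{l6}.
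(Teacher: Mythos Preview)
Your proof is correct and follows essentially the same approach as the paper's own proof. The only cosmetic difference is that where you argue directly that $r \mapsto -r$ is a bijection of $\mathbb{Z}_m$ to identify $\{\mu(-2,-r):r\in\mathbb{Z}_m\}$ with $\mathcal{C}(-2,1)$, the paper first writes this set as $\mathcal{C}(-2,-1)$ and then invokes Lemma~\ref{l4} (invertibility of $-1$) to obtain $\mathcal{C}(-2,-1)=\mathcal{C}(-2,1)$; your explicit remark that $m\geq 3$ is what makes $0\not\equiv\pm 2\pmod m$ in the disjointness step is a welcome clarification the paper leaves implicit.
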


\begin{proof}
For a fixed $ s \in {\mathbb{Z}_2} $, Lemma \ref{l2} implies that, 
\[\begin{gathered}
  \left\{ {\rho ({a^x}{b^s}):x \in {\mathbb{Z}_m}} \right\} = \left\{ {\mu ({\beta _s},x{\alpha _s}):x \in {\mathbb{Z}_m}} \right\} = \mathcal{C}({\beta _s},{\alpha _s}) , \mbox{ and } \\ 
  \left\{ {\lambda ({a^x}{b^s}):x \in {\mathbb{Z}_m}} \right\} = \left\{ {\mu ( - {\beta _s},x( - {\alpha _s})):x \in {\mathbb{Z}_m}} \right\} = \mathcal{C}( - {\beta _s}, - {\alpha _s}) .
\end{gathered}
\]
Thus
\begin{align*}
 	 {{\rm P}_1} &= \left\{ {\rho ({a^r}{b^s}):r \in {\mathbb{Z}_m},s \in {\mathbb{Z}_2}} \right\} = \left\{ {\rho ({a^r}):r \in {\mathbb{Z}_m}} \right\} \cup \left\{ {\rho ({a^r}b):r \in {\mathbb{Z}_m}} \right\} \\ 
 	 &= \mathcal{C}({\beta _0},{\alpha _0}) \cup \mathcal{C}({\beta _1},{\alpha _1}) = \mathcal{C}(0,1) \cup \mathcal{C}( - 2, - 1) .
\end{align*}

Since $- 1 $ is invertible in $ {\mathbb{Z}_m} $, Lemma \ref{l4} implies that $ \mathcal{C}( - 2, - 1) = \mathcal{C}( - 2,1) $ and the result follows. The proof for $ {\Lambda _1} $ is similar. Lemma \ref{l6} implies that both unions are disjoint.
\end{proof}

Instead of multiplying $ \rho $-maps or $ \lambda $-maps together repeatedly to find all the mappings in $ {\rm P}({D_m}) $ and $ \Lambda ({D_m}) $, we can accomplish this more efficiently by multiplying containers.

\begin{definition}
For any two containers $ \mathcal{C}(A,B) $ and $ \mathcal{C}(A',B') $, we define their product as:
\[
\mathcal{C}(A,B) \circ \mathcal{C}(A',B') = \left\{ {{\mu _1} \circ {\mu _2}:{\mu _1} \in \mathcal{C}(A,B){\text{ and }}{\mu _2} \in \mathcal{C}(A',B')} \right\}.
\]
\end{definition}
\begin{lemma}
\label{l8}
For $ A,A' \in {\mathbb{Z}_m} $ and $ B,B' \in {\mathbb{Z}_2} $, $ \mathcal{C}(A,B) \circ \mathcal{C}(A',B') = \mathcal{C}(AA',BA') $.
\end{lemma}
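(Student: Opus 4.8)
The plan is to reduce the identity to the single-map composition formula of Lemma \ref{l3}, since by definition $\mathcal{C}(A,B)\circ\mathcal{C}(A',B')$ is just the set of all compositions $\mu_1\circ\mu_2$ with $\mu_1\in\mathcal{C}(A,B)$ and $\mu_2\in\mathcal{C}(A',B')$. The key structural observation is that composition absorbs the second parameter of the second factor: Lemma \ref{l3} gives $\mu(A,B_1)\circ\mu(A',B_2)=\mu(AA',B_1A')$, in which $B_2$ does not appear in the output. This is exactly what collapses the a priori two-parameter family of compositions (one parameter from each container) into the single-parameter family that defines one container, and it is the heart of why a product of containers is again a container.

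I would then prove the two inclusions separately. For $\subseteq$, an arbitrary element of the left-hand side has the form $\mu(A,xB)\circ\mu(A',yB')$ for some $x,y\in\mathbb{Z}_m$; applying the composition formula yields $\mu(AA',(xB)A')=\mu(AA',x(BA'))$, and as $x$ ranges over $\mathbb{Z}_m$ this is manifestly a member of $\mathcal{C}(AA',BA')$. For $\supseteq$, given any $\mu(AA',x\cdot BA')\in\mathcal{C}(AA',BA')$, I would realise it as a composition by taking $\mu_1=\mu(A,xB)\in\mathcal{C}(A,B)$ together with any fixed element of $\mathcal{C}(A',B')$ as $\mu_2$; the natural choice is $\mu(A',0)=\mu(A',0\cdot B')$, the $y=0$ member, which lies in $\mathcal{C}(A',B')$ for every $B'$. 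The composition formula then returns precisely $\mu(AA',(xB)A')$, establishing the reverse inclusion.

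The one point that genuinely needs care is the scope of Lemma \ref{l3}. It is stated for $B,B'\in\mathbb{Z}_2$, whereas the second parameters $xB$ and $yB'$ of container elements range over all of $\mathbb{Z}_m$. I would dispose of this by noting that the computation in the proof of Lemma \ref{l3} never invokes the restriction $B,B'\in\mathbb{Z}_2$: it uses only that the image of any $\mu$-map has trivial $b$-component (so that $\alpha_0=1$ and $\beta_0=0$ when the second map is applied), and it is therefore valid verbatim for arbitrary second parameters in $\mathbb{Z}_m$. Alternatively one can simply re-derive the two-line composition directly from the definition of $\mu$-maps. Beyond this bookkeeping I expect no real obstacle; the essential mechanism is the absorption described above, and both inclusions follow from it mechanically.
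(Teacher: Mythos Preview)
Your proposal is correct and follows essentially the same route as the paper: both prove the two inclusions separately, each by a direct appeal to the composition formula of Lemma~\ref{l3}. The only cosmetic difference is that for the reverse inclusion the paper realises $\mu(AA',xBA')$ as $\mu(A,xB)\circ\mu(A',1\cdot B')$ rather than your $\mu(A,xB)\circ\mu(A',0\cdot B')$; either choice works since the second parameter of the right-hand factor is absorbed. Your remark about the stated hypothesis $B,B'\in\mathbb{Z}_2$ in Lemma~\ref{l3} is a fair observation---the paper applies Lemma~\ref{l3} with second parameters in $\mathbb{Z}_m$ without comment, so your explicit justification is, if anything, more careful than the original.
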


\begin{proof}
For $ \mu  \in \mathcal{C}(A,B) \circ \mathcal{C}(A',B') $, we have $ \mu  = {\mu _1} \circ {\mu _2} $ with $ {\mu _1} \in \mathcal{C}(A,B)$ and ${\mu _2} \in \mathcal{C}(A',B') $. Thus there exist $ x,y \in {\mathbb{Z}_m} $ such that $ {\mu _1} = \mu (A,xB) $ and $ {\mu _2} = \mu (A',yB') $. Therefore, by Lemma \ref{l3}, $ \mu  = \mu (AA',xBA') \in \mathcal{C}(AA',BA') $. On the other hand if $ \mu  \in \mathcal{C}(AA',BA') $, there is an $ x \in {\mathbb{Z}_m} $ with $ \mu  = \mu (AA',xBA') $. But, by Lemma \ref{l3} again, $ \mu (AA',xBA') = \mu (A,xB) \circ \mu (A',B') $, and it is clear that $ \mu (A,xB) \in \mathcal{C}(A,B) $ and $ \mu (A',B') = \mu (A',(1)B') \in \mathcal{C}(A',B') $. Therefore, $ \mu  \in \mathcal{C}(A,B) \circ \mathcal{C}(A',B') $ and the proof is complete.
\end{proof}

Although we will not use the fact here, it is interesting to note that the set of all containers endowed with this product forms a quotient semigroup of $ M({D_m}) $.

By Lemma \ref{l7}, $ {{\rm P}_1} $ is the union of two containers. Forming all possible products with these two containers will produce containers holding all possible products of $ \rho $-maps and, thus, express $ {\rm P}({D_m}) $ as a union of containers. Similarly the two containers holding the maps in $ {\Lambda _1} $ can be multiplied repeatedly to generate $ \Lambda ({D_m}) $. Before we generate these expressions, we take a few  more preliminary steps.

\begin{lemma}
\label{l9}
For $ A,B \in {\mathbb{Z}_m} $, 
\begin{enumerate}[(i)]
	\item $ \mathcal{C}(0,1) \circ \mathcal{C}(A,B) \subseteq \mathcal{C}(0,1),$
	\item $ \mathcal{C}(A,B) \circ \mathcal{C}(0,1) \subseteq \mathcal{C}(0,1) $.
\end{enumerate}
\end{lemma}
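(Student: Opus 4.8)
The plan is to prove both inclusions element-wise, tracking what the composition of two $\mu$-maps does to the \emph{first} parameter. The essential observation is that $\mathcal{C}(0,1) = \{\mu(0,z) : z \in \mathbb{Z}_m\}$ is exactly the set of all $\mu$-maps whose first parameter is $0$, and that by the composition rule a product $\mu(A,B)\circ\mu(A',B') = \mu(AA',BA')$ has first parameter equal to the product $AA'$ of the two first parameters. Hence composing with any factor whose first parameter is $0$ forces the product to have first parameter $0$, placing it in $\mathcal{C}(0,1)$.

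For part (i) I would take an arbitrary $\mu \in \mathcal{C}(0,1)\circ\mathcal{C}(A,B)$ and write $\mu = \mu_1\circ\mu_2$ with $\mu_1 = \mu(0,x) \in \mathcal{C}(0,1)$ and $\mu_2 = \mu(A,yB) \in \mathcal{C}(A,B)$ for some $x,y \in \mathbb{Z}_m$. Applying the composition formula gives $\mu = \mu(0\cdot A,\, x\cdot A) = \mu(0,Ax)$, and since $Ax \in \mathbb{Z}_m$ this lies in $\mathcal{C}(0,1)$, as required. For part (ii) I would dually take $\mu = \mu_1\circ\mu_2$ with $\mu_1 = \mu(A,xB) \in \mathcal{C}(A,B)$ and $\mu_2 = \mu(0,y) \in \mathcal{C}(0,1)$; now the composition formula yields $\mu = \mu(A\cdot 0,\, xB\cdot 0) = \mu(0,0)$, the constant map sending every element of $D_m$ to the identity, and since $\mu(0,0) = \mu(0,0\cdot 1) \in \mathcal{C}(0,1)$ the inclusion follows.

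The one point requiring care is that here $B$ — and therefore the second parameters $yB$ and $xB$ — range over all of $\mathbb{Z}_m$, not merely over $\mathbb{Z}_2$, so Lemmas \ref{l3} and \ref{l8} cannot be quoted verbatim in the form stated. I would circumvent this either by re-deriving the needed composition directly from the definition of the $\mu$-map (using $\alpha_0 = 1$ and $\beta_0 = 0$, so that post-composing by $\mu(A',B')$ simply multiplies the exponent by $A'$ and annihilates the $B'$ term), which shows $\mu(A,B)\circ\mu(A',B') = \mu(AA',BA')$ for all parameters in $\mathbb{Z}_m$, or by observing that the proof of Lemma \ref{l3} never actually uses the restriction $B,B' \in \mathbb{Z}_2$. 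Either way no genuine difficulty arises: the entire content is the bookkeeping of the first parameter, and the appearance of the factor $0$ in each product does all the work.
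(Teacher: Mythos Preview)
Your proof is correct and is essentially the element-wise unpacking of the paper's argument, which applies Lemma~\ref{l8} at the container level to obtain $\mathcal{C}(0,1)\circ\mathcal{C}(A,B)=\mathcal{C}(0,A)$ and $\mathcal{C}(A,B)\circ\mathcal{C}(0,1)=\mathcal{C}(0,0)$, and then invokes Lemma~\ref{l5} for the inclusions into $\mathcal{C}(0,1)$. Your remark about the $\mathbb{Z}_2$ restriction in the stated hypotheses of Lemmas~\ref{l3} and~\ref{l8} is well taken: the paper's own proof of Lemma~\ref{l9} tacitly applies Lemma~\ref{l8} with $B\in\mathbb{Z}_m$, and as you observe the proofs of those lemmas go through unchanged for arbitrary second parameters.
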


\begin{proof}
To establish part \emph{(i)}, note that by Lemma \ref{l8}, 
\[
	 \mathcal{C}(0,1) \circ \mathcal{C}(A,B) = \mathcal{C}(0 \cdot A,1 \cdot A) = \mathcal{C}(0,A) \subseteq \mathcal{C}(0,1) .
\]
The last containment follows by Lemma \ref{l5}. Part \emph{(ii)} also follows by Lemmas \ref{l8} and \ref{l5}:
\[
  \mathcal{C}(A,B) \circ \mathcal{C}(0,1) = \mathcal{C}(A \cdot 0,B \cdot 0) = \mathcal{C}(0,0) \subseteq \mathcal{C}(0,1).\qedhere
\]
\end{proof}

The following is a simple fact about congruences which we will use in several of the arguments below.

\begin{lemma}
\label{l10}
If $ x $ and y are positive integers, then for every $ u,v \in \mathbb{Z}, $  $ xu \equiv xv\;(\bmod xy) $ if and only if $ u \equiv v\;(\bmod y) $.
\end{lemma}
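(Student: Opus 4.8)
The plan is to rewrite each congruence as a divisibility statement and then pass between the two by cancelling the common factor $x$, which is legitimate precisely because $x$ is a positive integer and hence nonzero. First I would observe that $xu \equiv xv\;(\bmod xy)$ is, by definition, equivalent to $xy \mid (xu - xv)$, that is, $xy \mid x(u-v)$. Likewise, $u \equiv v\;(\bmod y)$ is equivalent to $y \mid (u-v)$. Thus the whole lemma reduces to the claim that $xy \mid x(u-v)$ if and only if $y \mid (u-v)$.

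For the forward direction I would assume $xy \mid x(u-v)$, so that $x(u-v) = xyk$ for some $k \in \mathbb{Z}$. Since $x \neq 0$ I may cancel it to obtain $u - v = yk$, which gives $y \mid (u-v)$, i.e. $u \equiv v\;(\bmod y)$, as required. The converse is exactly the same computation run in reverse: starting from $u - v = yk$, multiplying through by $x$ yields $x(u-v) = xyk$, whence $xy \mid x(u-v)$ and therefore $xu \equiv xv\;(\bmod xy)$.

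I expect no genuine obstacle here, as this is a routine divisibility argument. The only point requiring any care is that the cancellation of $x$ in the forward direction is valid, and this is guaranteed by the hypothesis that $x$ (and $y$) are positive integers, so in particular $x$ is nonzero. It is worth noting for the record that positivity of $x$ and $y$ is used only to ensure $x \neq 0$ and that $xy$ is a sensible positive modulus; the equivalence itself is purely an equivalence of divisibility relations over $\mathbb{Z}$.
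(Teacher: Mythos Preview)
Your proof is correct and is essentially the same argument as the paper's: both reduce the congruences to the divisibility statement $xy \mid x(u-v) \iff y \mid (u-v)$ and pass between the two by cancelling the nonzero factor $x$. The only cosmetic difference is that the paper unnecessarily splits the forward direction into the cases $u-v=0$ and $u-v\neq 0$, whereas you cancel $x$ directly; your version is slightly cleaner.
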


\begin{proof}
Suppose first that $ xu \equiv xv\;(\bmod xy) $. Then $ x(u - v) \equiv 0\;(\bmod xy) $. Thus there  exists a $ t \in \mathbb{Z} $ such that $(*)$ $ x(u - v) = txy $. If $ u - v = 0 $, then $ u = v $ and it follows that $ u \equiv v\;(\bmod y) $. If $ u - v \ne 0 $ then we obtain $ u - v = ty $ by cancellation of x from both sides of $(*)$. Thus $ u \equiv v\;(\bmod y) $. Conversely, suppose that $ u \equiv v\;(\bmod y) $. It follows that there exists $ t \in \mathbb{Z} $ such that $ u - v = ty $. Thus $ x(u - v) = txy $. Therefore $ xu \equiv xv\;(\bmod xy) $.
\end{proof}

It is well known that if $ S $ is a semigroup and $ x \in S $, the positive powers of  x, $ \left\{ {{x^i}:i \in {\mathbb{Z}^ + }} \right\} $, form a subsemigroup of $S$ called the $ monogenic$ $ semigroup $ generated by $x$, denoted $ \left\langle x \right\rangle $. If $S$ is finite, then for each $ x \in S $ the powers of $x$ begin to repeat at some point. Let $c$ be the smallest positive integer such that there is a $ k \in {\mathbb{Z}^ + } $ with $ {x^c} = {x^{c + k}}\; $ and $ k $ the least such positive integer. Here $c$ and $k$ are called the $ index $ and the $ period $ of $x$, and $ \left\langle x \right\rangle  = \left\{ {{x^1}, \ldots ,{x^{c + k - 1}}} \right\} $ is the set of distinct powers of $x$. When $ S = {\mathbb{Z}_m} $ we denote the index and period of $ x \in {\mathbb{Z}_m} $, by $ in{d_m}(x) $ and $ pe{r_m}(x) $ respectively. If $ x \not \equiv 0\ $ and $ in{d_m}(x) = 1 $ then
 $ {x^{pe{r_m}(x)}} \equiv 1\;(\bmod m) $, $ \left\langle x \right\rangle $ is the cyclic group of order $ pe{r_m}(x) $, and this happens exactly when $x$ is invertible in $ \mathbb{Z}_m $. In this case we denote the order of $x$ as an element of $ \left( {{\mathbb{Z}_m}, \cdot } \right) $ by $ or{d_m}(x) $.

The next result characterizes the indices for $- 2 $ and $ 2 $ in $ {\mathbb{Z}_m} $ in  terms of the number theoretic form of $m$. In some cases information about the period can also be given.

\begin{lemma}
\label{l11}
If $ m = {2^\ell }n \geq 3 $ with $n$ odd, $ \ell  \geq 0 $, and $ n \geq 1 $, then for $ x \in \left\{ { - 2,2} \right\} $ 
\begin{enumerate}[(i)]
	\item if $m$ is odd, then $ in{d_m}(x) = 1 $ and $ pe{r_m}(x) = or{d_m}(x) $,
	\item if $m$ is even and $ n > 1 $, then $ in{d_m}(x) = \ell $,
	\item if $m$ is even and $ n = 1 $, then $ in{d_m}(x) = \ell $ and $ pe{r_m}(x) = 1 $.
\end{enumerate}
\end{lemma}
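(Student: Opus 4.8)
The plan is to reduce all three cases to two basic facts: $\pm2$ is a unit modulo any odd number, while modulo a power of $2$ its powers fall to $0$. Write $x$ for either $-2$ or $2$; since $m\geq 3$ we have $x\not\equiv 0\pmod m$. Case (i) is then immediate: when $m$ is odd, $\gcd(x,m)=1$, so $x$ is invertible in $\mathbb{Z}_m$, and by the remarks preceding the lemma invertibility is exactly the condition giving $\operatorname{ind}_m(x)=1$ with $\langle x\rangle$ cyclic of order $\operatorname{per}_m(x)=\operatorname{ord}_m(x)$.

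For (iii), with $m=2^\ell$, the $2$-adic valuation $v_2$ makes everything transparent: since $x=\pm2$, we have $v_2(x^i)=i$, so $x^i\equiv 0\pmod{2^\ell}$ precisely when $i\geq\ell$. In particular $x^\ell=x^{\ell+1}=0$, giving $\operatorname{ind}_m(x)\leq\ell$ and $\operatorname{per}_m(x)=1$. To see the index is not smaller, fix $c<\ell$ and any $k\geq1$ and factor $x^c-x^{c+k}=x^c(1-x^k)$; here $1-x^k$ is odd, so $v_2(x^c-x^{c+k})=c<\ell$ and hence $x^c\not\equiv x^{c+k}\pmod{2^\ell}$. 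Thus the first repetition occurs only at the $\ell$-th power, so $\operatorname{ind}_m(x)=\ell$. (The same conclusion follows from Lemma~\ref{l10} applied with $2^\ell=2^c\,2^{\,\ell-c}$.)

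Finally, for (ii) with $m=2^\ell n$, $\ell\geq1$ and $n>1$ odd, I would use the ring isomorphism $\mathbb{Z}_m\cong\mathbb{Z}_{2^\ell}\times\mathbb{Z}_n$ given by the Chinese Remainder Theorem, which is in particular an isomorphism of multiplicative semigroups. Under it $x\mapsto(x_1,x_2)$ and $x^i\mapsto(x_1^{\,i},x_2^{\,i})$, so $x^c\equiv x^{c+k}\pmod m$ holds if and only if $x_1^{\,c}\equiv x_1^{\,c+k}\pmod{2^\ell}$ \emph{and} $x_2^{\,c}\equiv x_2^{\,c+k}\pmod n$ hold for the same $k$. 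The valuation computation of case (iii), valid for every $\ell\geq1$ (including $2^\ell=2$, where $x_1=0$), shows the first congruence admits some $k\geq1$ exactly when $c\geq\ell$; and since $x_2=\pm2$ is invertible modulo the odd number $n>1$, the second admits some $k\geq1$ exactly when $\operatorname{ord}_n(x)\mid k$, for every $c\geq1$. Choosing $c=\ell$ and $k=\operatorname{ord}_n(x)$ realizes a repetition, while no $c<\ell$ can, so $\operatorname{ind}_m(x)=\ell$.

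The step I expect to require the most care is the combination in (ii): a repetition of $x$ in $\mathbb{Z}_m$ demands the \emph{same} exponent gap $k$ to work simultaneously in both CRT factors, and it is this coupling that forces the index to be the maximum $\max(\ell,1)=\ell$ of the two component indices rather than something smaller. The potential nuisance of the sign in $x=-2$ is neutralized by running the $2$-power estimates through $v_2$, which ignores the unit $(-1)^i$, so no case split on the sign is needed.
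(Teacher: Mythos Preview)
Your proof is correct, and your route differs from the paper's in two ways worth noting. For (iii) the paper factors out $2^c$ via Lemma~\ref{l10} and argues by contradiction that no two terms among $(-2)^1,\ldots,(-2)^\ell$ can coincide; your $2$-adic valuation computation $v_2\bigl(x^c(1-x^k)\bigr)=c$ does the same work in one line and makes the sign issue disappear automatically. For (ii) the paper again works directly with congruences: assuming the index $c$ is less than $\ell$ it divides through by $2^c$ using Lemma~\ref{l10} to force $(-2)^k$ to be a unit modulo $2^{\ell-c}n$, a contradiction; assuming $c>\ell$ it builds an explicit repetition at exponent $\ell$ from $\mathrm{ord}_n(-2)$. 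Your CRT decomposition $\mathbb{Z}_m\cong\mathbb{Z}_{2^\ell}\times\mathbb{Z}_n$ replaces both halves of that argument with the single observation that the index in a product is governed by the more restrictive factor, here the $2^\ell$ side. What your approach buys is a uniform treatment of $x=2$ and $x=-2$ and a clearer structural reason for why $\ell$ appears; what the paper's approach buys is self-containment, since it never leaves the Lemma~\ref{l10} toolkit already set up for the container arguments.
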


\begin{proof}
We will give proofs of each part only for $ x =  - 2 $, since the proofs for $ x = 2 $ are quite similar.
\begin{proof}[Proof of (i)]
With $m$ odd, it follows that $- 2 $ is invertible in $ {\mathbb{Z}_m} $; thus it has an order, say $ or{d_m}( - 2) = k $. Thus we have $ 1 \equiv {( - 2)^k} $ and hence, $ {( - 2)^1} = {( - 2)^{1 + k}} $ and part \emph{(i)} of the lemma follows easily. 
\noqed
\end{proof}
\begin{proof}[Proof of (ii)]
With $m$ even $ \left( {\ell  > 0} \right) $ and $ n > 1 $, we know that $- 2 $ has an index and a period, say $ in{d_m}( - 2) = c $ and $ pe{r_m}( - 2) = k $. We wish to show that $ c = \ell $. First suppose that $ c < \ell $. We can rewrite $ {( - 2)^c} \equiv {( - 2)^{c + k}}\;(\bmod m) $ as 
 $ {( - 1)^c}{2^c} \equiv {( - 1)^c}{2^c}{( - 2)^k}\;(\bmod {2^c}{2^{\ell  - c}}n) $.

Applying  Lemma \ref{l10} we have, $ {( - 1)^c} \equiv {( - 1)^c}{( - 2)^k}\;(\bmod {2^{\ell  - c}}n) $. Multiplying on both sides by $ {( - 1)^c} $ we obtain $ 1 \equiv {( - 2)^k}\;(\bmod {2^{\ell  - c}}n) $. But this implies that $- 2 $ is invertible modulo $ {2^{\ell  - c}}n $ with $ \ell  - c > 0 $. Since $ \gcd ( - 2,{2^{\ell  - c}}n) = 2 \ne 1 $, $- 2 $ is not invertible, and we have a contradiction. Now suppose that $ c > \ell $. Since $ n > 1 $ is odd, $ \gcd (n, - 2) = 1 $ and, therefore, $- 2 $ is invertible modulo $n$. Letting $ r = or{d_n}( - 2) $, we have $ 1 \equiv {( - 2)^r}(\bmod n) $. By Lemma \ref{l10}, $ {2^\ell } = {2^\ell }{( - 2)^k}\;(\bmod {2^\ell }n) $. Multiplying on both sides by $ {( - 1)^\ell } $ gives $ {( - 2)^\ell }\; \equiv {( - 2)^{\ell  + k}}(\bmod m) $. But since $ c > \ell $, this contradicts the minimality of the originally selected index, $c$. It follows that $ \ell  = c = in{d_m}( - 2) $.
\noqed
\end{proof}
\begin{proof}[Proof of (iii)]
In this case we have $ m = {2^\ell } $. Note that 
\[
 {( - 2)^\ell } \equiv {( - 1)^\ell }{2^\ell } \equiv 0\;(\bmod {2^\ell }) 
\]
and that for each $ t \in {\mathbb{Z}^ + } $, $ {( - 2)^{\ell  + t}} \equiv 0\;(\bmod {2^\ell }) $; thus $ {( - 2)^\ell } = {( - 2)^{\ell  + 1}} $. We will argue that $ in{d_{{2^\ell }}}( - 2) = \ell $ and, consequently, $ pe{r_{{2^\ell }}}( - 2) = 1 $. We will first prove that the terms in the sequence $ \left( s \right) = \left( {{{( - 2)}^1},{{( - 2)}^2}, \ldots ,{{( - 2)}^\ell }} \right) $ 
are distinct modulo $ {2^\ell } $. Once this is established, we will know that $ {( - 2)^\ell } $ is the only power of $- 2 $ in $ \left( s \right) $ which is congruent to 0. Thus if a higher power of $- 2 $ is congruent to a term of $ \left( s \right) $, it is congruent to $ {( - 2)^\ell } $ only. Once the terms of $ \left( s \right) $ are shown to be distinct, it is then impossible for $ in{d_m}( - 2) < \ell $, and the result follows.

To show that the terms in $ \left( s \right) $ are distinct, suppose that $ 0 < u < v \leq \ell $, but that $ {( - 2)^u} \equiv {( - 2)^v}\;(\bmod {2^\ell }) $. We can rewrite this as 
\[
 {( - 1)^u}{2^u} \equiv {( - 1)^u}{2^u}{( - 2)^{v - u}}\;(\bmod {2^u}{2^{\ell  - u}}) 
\]
and conclude, by Lemma \ref{l10}, that $ {( - 1)^u} \equiv {( - 1)^u}{( - 2)^{v - u}}\;(\bmod {2^{\ell  - u}}) $. Multiplying on both sides by $ {( - 1)^u} $, we get $ 1 \equiv {( - 2)^{v - u}}\;(\bmod {2^{\ell  - u}}) $ 
and, hence, $- 2 $ is invertible modulo $ {2^{\ell  - u}} $. Since and $ u < v \leq \ell $ we see that $ \gcd ( - 2,{2^{\ell  - u}}) = 2 \ne 1 $ which contradicts the invertibility of $- 2 $. Thus the powers of $- 2 $ in $ \left( s \right) $ are distinct.
\end{proof} \noqed
\end{proof}

We are now able to express the right and left commutation semigroups as disjoint unions of containers.

\begin{theorem}
\label{t12}
For $ m = {2^\ell }n \geq 3 $ with n odd, $ \ell  \geq 0 $, and $ n \geq 1 $,
\begin{enumerate}[(i)]
	\item $ 
	 {\rm P}({D_m}) = \mathcal{C}(0,1) \cup \left( {\bigcup\limits_{i = 1}^t {\mathcal{C}({{( - 2)}^i},{{( - 2)}^{i - 1}})} } \right){\text{, where }} 
	  t = \left\{ {
	  \begin{array}{*{20}{c}}
  		{or{d_m}( - 2){\text{ for }}\ell  = 0,\,n > 1} \\ 
 		{\ell  + pe{r_m}( - 2) - 1{\text{ for }}\ell  > 0,\,n > 1} \\ 
 		{\ell  - 1{\text{ for }}\ell  > 0,\,n = 1} 
	\end{array}} \right.$
	\item $
	\Lambda ({D_m}) = \mathcal{C}(0,1) \cup \left( {\bigcup\limits_{i = 1}^{t'} {\mathcal{C}({2^i},{2^{i - 1}})} } \right){\text{, where }}  
	t' = \left\{ {
	\begin{array}{*{20}{c}}
 		{or{d_m}(2){\text{ for }}\ell  = 0,\, n > 1} \\ 
  		{\ell + pe{r_m}(2) - 1{\text{ for }}\ell  > 0,\, n > 1} \\ 
  		{\ell  - 1{\text{ for }}\ell  > 0,\, n = 1} 
	\end{array}} \right. ,$
\end{enumerate}
and these unions are disjoint.
\end{theorem}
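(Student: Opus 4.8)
The plan is to prove the statement for $\mathrm{P}(D_m)$; the argument for $\Lambda(D_m)$ is identical after replacing $-2$ by $2$ throughout, using the second halves of Lemmas \ref{l7} and \ref{l11}. The guiding idea is that, by Lemma \ref{l7}, $\mathrm{P}_1=\mathcal{C}(0,1)\,\dot\cup\,\mathcal{C}(-2,1)$, so $\mathrm{P}(D_m)$ is the union of all finite products of the two generating containers $\mathcal{C}(0,1)$ and $\mathcal{C}(-2,1)$. Lemma \ref{l9} shows that any product having a factor $\mathcal{C}(0,1)$ collapses back into $\mathcal{C}(0,1)$; hence the only products that can contribute new maps are the pure powers $\mathcal{C}(-2,1)^i$, and $\mathrm{P}(D_m)=\mathcal{C}(0,1)\cup\bigcup_{i\ge 1}\mathcal{C}(-2,1)^i$.

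First I would compute these powers. By induction on $i$, using the composition rule of Lemma \ref{l8} (whose proof, via Lemma \ref{l3} and $\beta_0=0$, is in fact valid for an arbitrary second coordinate in $\mathbb{Z}_m$, not merely in $\mathbb{Z}_2$), one obtains $\mathcal{C}(-2,1)^i=\mathcal{C}((-2)^i,(-2)^{i-1})$: the base case $i=1$ is immediate since $(-2)^0=1$, and the inductive step is $\mathcal{C}((-2)^i,(-2)^{i-1})\circ\mathcal{C}(-2,1)=\mathcal{C}((-2)^{i+1},(-2)^i)$. This already gives $\mathrm{P}(D_m)=\mathcal{C}(0,1)\cup\bigcup_{i\ge 1}\mathcal{C}((-2)^i,(-2)^{i-1})$, so it remains only to truncate the infinite union at the stated index $t$ and to verify disjointness.

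The truncation is where the three cases of Lemma \ref{l11} enter, and this is the step I expect to require the most care. The mechanism is always Lemma \ref{l5}, namely $\mathcal{C}(A,BC)\subseteq\mathcal{C}(A,C)$: once the first coordinates $(-2)^i$ begin to repeat, I use it to absorb the later containers into earlier ones. Writing $in{d_m}(-2)=c$ and $pe{r_m}(-2)=k$, then for $i\ge c$ and $j\ge 0$ we have $(-2)^{i+jk}\equiv(-2)^i$, and factoring $(-2)^{i+jk-1}=(-2)^{jk}(-2)^{i-1}$ gives $\mathcal{C}((-2)^{i+jk},(-2)^{i+jk-1})\subseteq\mathcal{C}((-2)^i,(-2)^{i-1})$. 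In the odd case ($c=1$, $k=or{d_m}(-2)$) this collapses everything beyond $i=or{d_m}(-2)$; in the case $\ell>0,\,n>1$ ($c=\ell$) it collapses everything beyond $i=\ell+pe{r_m}(-2)-1$. The power-of-two case $n=1$ I would treat separately: here $(-2)^i\equiv 0$ for every $i\ge\ell$, so for such $i$ the container is $\mathcal{C}(0,(-2)^{i-1})=\mathcal{C}(0,(-2)^{i-1}\cdot 1)\subseteq\mathcal{C}(0,1)$ by Lemma \ref{l5} and is absorbed into $\mathcal{C}(0,1)$, leaving precisely $i=1,\dots,\ell-1$.

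Finally I would establish disjointness using Lemma \ref{l6}, which asserts $\mathcal{C}(A,B)\cap\mathcal{C}(A',B')\ne\varnothing$ iff $A\equiv A'\pmod m$. Thus I need only that the first coordinates $0,(-2)^1,\dots,(-2)^t$ be pairwise distinct modulo $m$. By the definition of index and period, $(-2)^1,\dots,(-2)^t$ are exactly the distinct nonzero powers of $-2$ in each of the three cases (distinctness up to $t$ being precisely the pre-period behaviour recorded in Lemma \ref{l11}), and none of them is $\equiv 0$: in the odd case $-2$ is a unit, in the case $n>1$ a power of $2$ cannot be divisible by $2^\ell n$ with $n>1$, and in the power-of-two case the first vanishing power is $(-2)^\ell$, which lies beyond the range $i\le\ell-1$. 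Hence $0$ is also distinct from every $(-2)^i$, and the union is disjoint. The two genuine difficulties are thus bookkeeping ones: matching the collapse index to the piecewise formula for $t$, and ensuring the power-of-two case is disposed of by absorption into $\mathcal{C}(0,1)$ rather than by the generic periodicity argument used in the other two cases.
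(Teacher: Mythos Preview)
Your proposal is correct and follows essentially the same route as the paper: use Lemma~\ref{l7} for the two generating containers, Lemma~\ref{l9} to absorb any product with a $\mathcal{C}(0,1)$ factor, Lemma~\ref{l8} to identify the powers $\mathcal{C}(-2,1)^i=\mathcal{C}((-2)^i,(-2)^{i-1})$, the three cases of Lemma~\ref{l11} to truncate, and Lemma~\ref{l6} for disjointness. Your write-up is in fact a bit more careful than the paper's in two places: you invoke Lemma~\ref{l5} to get \emph{containment} of the post-periodic containers in earlier ones (the paper writes an equality in Case~(b) that is really only a containment), and you spell out why $0,(-2)^1,\dots,(-2)^t$ are pairwise distinct modulo $m$, which the paper leaves implicit.
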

\begin{proof}[Proof of (i)]
As mentioned earlier, $ {\rm P}({D_m}) $ is generated by repeated multiplication of the containers
 $ \mathcal{C}(0,1) $ and $ \mathcal{C}( - 2,1) $. Lemma \ref{l9} shows that the $ \mu $-maps in any container produced by a product having $ \mathcal{C}(0,1) $ as a factor are already in $ \mathcal{C}(0,1) $; thus, after we include $ \mathcal{C}(0,1) $ in the union, we need only use $ \mathcal{C}( - 2,1) $ in forming these products. To this end, we list the sequence of ``powers'' of $ \mathcal{C}( - 2,1) $ taken according to Lemma \ref{l8}:
\begin{equation}
\label{e31}
\mathcal{C}( - 2,1),\mathcal{C}({( - 2)^2}, - 2),\mathcal{C}({( - 2)^3},{( - 2)^2}), \ldots ,\mathcal{C}({( - 2)^i},{( - 2)^{i - 1}}), \ldots .
\end{equation}
We now divide the argument into three cases: (a) $m$ odd, (b) $ m = {2^\ell }n $ with $n$ odd and $ n > 1 $, (c) $ m = {2^\ell } $.

\begin{proof}[Case (a)]
Applying Lemma \ref{l11}\emph{(i)} to the first coordinates of the containers in (\ref{e31}), we see that there is a first repeat of the powers of $- 2 $ when $ {( - 2)^1} \equiv {( - 2)^{1 + d}} $ with $ d = or{d_m}( - 2) $. Keeping an eye on the second coordinates we pass from the last container in the union given in Theorem \ref{t12}\emph{(i)}, $ \mathcal{C}({( - 2)^d},{( - 2)^{d - 1}}) $, to the next container, $ \mathcal{C}({( - 2)^{d + 1}},{( - 2)^d}) $, by forming the product
\[
\mathcal{C}({( - 2)^d},{( - 2)^{d - 1}}) \circ \mathcal{C}( - 2,1) = \mathcal{C}({( - 2)^{d + 1}},{( - 2)^d}) = \mathcal{C}( - 2,1) .
\]
Clearly we did not need to include $ \mathcal{C}({( - 2)^{d + 1}},{( - 2)^d}) $ or any further containers in our union, since they will already appear earlier. This establishes \emph{(i)} for Case {(a)}.
\noqed
\end{proof}
\begin{proof}[Case (b)]
In the same way, we apply Lemma \ref{l11}\emph{(ii)} to find that $ in{d_m}( - 2) \equiv \ell $. Therefore we have $ {( - 2)^\ell } \equiv {( - 2)^{\ell  + d}} $ where $ d = pe{r_m}( - 2) $. Since $ {( - 2)^{\ell  + d}} \equiv {( - 2)^\ell } $, when we form the product of $ \mathcal{C}( - 2,1) $ with $ \mathcal{C}({( - 2)^{\ell  + d - 1}},{( - 2)^{\ell  + d - 2}}) $, the last container mentioned in Theorem \ref{t12}\emph{(i)}, we have
\begin{align*}
\mathcal{C}({( - 2)^{\ell  + d - 1}},{( - 2)^{\ell  + d - 2}}) \circ \mathcal{C}( - 2,1) &= \mathcal{C}({( - 2)^{\ell  + d}},{( - 2)^{\ell  + d - 1}})\\
&= \mathcal{C}({( - 2)^\ell },{( - 2)^{\ell  - 1}})
\end{align*}
As in the previous case, this container, and subsequent containers from (\ref{e31}), are already present in (\ref{e31}) and, thus, we may stop forming containers at this point.
\noqed
\end{proof}
\begin{proof}[Case (c)]
By Lemma 11\emph{(iii)}, we have $ {( - 2)^\ell } = {( - 2)^{\ell  + 1}} $. In this case, the last container listed in the union in (3.1) is $ \mathcal{C}({( - 2)^{\ell  - 1}},{( - 2)^{\ell  - 2}}) $. Forming the next product we obtain
\begin{align*}
\mathcal{C}({( - 2)^{\ell  - 1}},{( - 2)^{\ell  - 2}}) \circ \mathcal{C}( - 2,1) &= \mathcal{C}({( - 2)^\ell },{( - 2)^{\ell  - 1}}) \\
&= \mathcal{C}(0,{( - 2)^{\ell  - 1}}) \subseteq \mathcal{C}(0,1) .
\end{align*}
Thus this and subsequent containers are contained within $ \mathcal{C}(0,1) $ and we are able to stop the procedure with $ \mathcal{C}({( - 2)^{\ell  - 1}},{( - 2)^{\ell  - 2}}) $ as the last container in the union.
\noqed
\end{proof}
Note that in any case, Lemma \ref{l6} implies that all the containers in the union, Theorem \ref{t12}\emph{(i)}, are disjoint.
\noqed
\end{proof}
\begin{proof}[Proof of (ii)]
The proof for $ \Lambda ({D_m}) $ is essentially the same. We generate the ``powers'' of $ \mathcal{C}(2,1) $ and apply the same reasoning, quoting Lemma \ref{l11} at the appropriate points.
\end{proof}

Since both unions in Theorem \ref{t12} are disjoint, we can calculate the orders of these semigroups if we know the cardinality of each container and the periods of $- 2 $ and $ 2 $ in $ {\mathbb{Z}_m} $. There is no known formula for these periods, so they must be calculated individually for each value of $m$; however, in the next section, we will calculate the cardinality of each container. 

\section{The Cardinality of $ \mathcal{C}(A,B) $ }
Since $ \mathcal{C}(A,B) = \left\{ {\mu (A,xB):x \in {Z_m}} \right\} $, it contains at most $m$ distinct mappings. Our goal here is to determine which, if any, of these mappings are equal. This will then allow us to find the cardinality of each container. Our first task is to find the upper central series of $ {D_m} $. 

\begin{definition}
If $G$ is a group, the \emph{left normed commutator of weight $ w \geq 2 $} with entries $ {g_1},{g_2}, \ldots ,{g_w} \in G $ is the iterated commutator $ [ \ldots [[{g_1},{g_2}],{g_3}], \ldots ,{g_w}] $ written, more simply, as $ [{g_1},{g_2},{g_3}, \ldots ,{g_w}] $. In the special case of a repeated entry, we write $ [x,(n)y] = [x,y,y, \ldots ,y]\;(n{\text{ times)}} $.
\end{definition}
\begin{definition}
The \emph{upper central series of a group $G$} is the series of subgroups of $G$, 
\[
{Z_0}(G) \leq {Z_1}(G) \leq  \cdots  \leq {Z_n}(G) \leq  \cdots 
\]
with $ {Z_0}(G) = \left\{ 1 \right\} $ and $ {Z_n}(G) = \left\{ {g \in G:[g,{g_1},{g_2}, \ldots ,{g_n}] = 1,{\mbox{ for all }}{g_1},{g_2}, \ldots ,{g_n} \in G} \right\} $. We call $ {Z_n}(G) $ the $n$-th-centre of $G$ and, where no ambiguity arises, denote it $ {Z_n} $.
\end{definition}
Each term of the upper central series is normal in $G$. For $G$ finite, there is a least possible $ c \geq 0 $ for which
 $ {Z_c} = {Z_{c + 1}} = {Z_{c + 2}} =  \cdots $. If the upper central series reaches $G$ (i.e. $ {Z_c} = G $ ), then we say $G$ is nilpotent of class $ c $; otherwise $G$ is non-nilpotent and $ {Z_c}(G) < G $. Among dihedral groups only the 2-groups, $ {D_{{2^\ell }}} $  $ (\ell  \in {\mathbb{Z}^ + }) $, are nilpotent and, in this case, $ \left\{ 1 \right\} = {Z_0}({D_{{2^\ell }}}) < {Z_1}({D_{{2^\ell }}}) <  \cdots  < {Z_\ell }({D_{{2^\ell }}}) = {D_{{2^l}}} $.

The following characterization of the terms of the upper central series for $ {D_m} $ is well-known and proved routinely.

\begin{theorem}
\label{t14} \mbox{}
\begin{enumerate}[(a)]
	\item If $ u \geq 0 $ and $ m $ is odd, then $ {Z_u}({D_m}) = \left\{ 1 \right\}, $
	\item If $ u \geq 0 $ and $ m $ is even with $ m = {2^\ell }n $  $ (n > 0{\text{ and }}n{\text{ odd}}) $, then
	\begin{enumerate}[(i)]
		\item if $ n > 1 $, then \[{Z_u}({D_m}) = \left\{ {\begin{array}{*{20}{c}}
  {\left\{ {{a^N}:N = ({2^{\ell  - u}}n)x{\text{ and }}0 \leq x < {2^u}} \right\}{\text{   for }}u < \ell } \\ 
  {\left\{ {{a^{nx}}:0 \leq x < {2^\ell }} \right\}{\text{                               for }}u \geq \ell } 
\end{array}} \right.,\]
		\item if $ n = 1 $, then \[ {Z_u}({D_m}) = \left\{ {\begin{array}{*{20}{c}}
  {\left\{ {{a^N}:N = {2^{\ell  - u}}x{\text{ and }}0 \leq x < {2^u}} \right\}{\text{       for }}u < \ell } \\ 
  {{D_m}{\text{                                                   for }}u \geq \ell } 
\end{array}} \right. . \] 
	\end{enumerate}\qed
\end{enumerate}
\end{theorem}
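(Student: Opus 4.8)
The plan is to reduce the computation of the entire upper central series to a single recursive step and then induct on $u$. The starting point is the observation that the definition of $Z_n$ given above unwinds directly: for any group and any $g$, one has $g\in Z_u$ if and only if $[g,x]\in Z_{u-1}$ for every $x\in D_m$, since the requirement $[g,x,h_1,\dots,h_{u-1}]=1$ for all $h_i$ is precisely the condition that $[g,x]$ lie in $Z_{u-1}$. Because each $Z_{u-1}$ is a subgroup containing $1$, this turns the problem into tracking, level by level, which elements have \emph{all} their commutators inside the previously computed centre.

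Next I would record the two commutator facts that drive everything, both immediate from Lemma \ref{l1} (or CI): for a rotation, $[a^i,a^k]=1$ and $[a^i,a^kb]=a^{-2i}$, so the set $\{[a^i,x]:x\in D_m\}$ equals $\{1,a^{-2i}\}$; for a reflection, $[a^ib,a^k]=a^{2k}$, $[a^ib,b]=a^{2i}$ and $[a^ib,a^kb]=a^{2i-2k}$, so $\{[a^ib,x]:x\in D_m\}=\langle a^2\rangle$. Combined with the recursion, these give the two clean criteria on which the induction rests: $a^i\in Z_u$ exactly when $a^{2i}\in Z_{u-1}$, and $a^ib\in Z_u$ exactly when $\langle a^2\rangle\subseteq Z_{u-1}$. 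The odd case (a) then falls out at once: the rotation criterion with $Z_0=\{1\}$ forces $a^{2i}=1$, hence $i\equiv 0$ since $2$ is invertible mod $m$, while no reflection qualifies because $\langle a^2\rangle=\langle a\rangle\neq\{1\}$; thus $Z_1=\{1\}=Z_0$ and the series is constant.

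For the even case (b) I would induct on $u$, carrying the hypothesis $Z_{u-1}=\langle a^{2^{\ell-u+1}n}\rangle$ while $u-1<\ell$. Applying the rotation criterion, $a^i\in Z_u$ iff $2i\equiv 0\pmod{2^{\ell-u+1}n}$, which by Lemma \ref{l10} is equivalent to $i\equiv 0\pmod{2^{\ell-u}n}$; this yields $Z_u\cap\langle a\rangle=\langle a^{2^{\ell-u}n}\rangle$, matching the stated formula, and a cardinality comparison shows $\langle a^2\rangle\not\subseteq Z_{u-1}$, so no reflection enters while $u<\ell$. At $u=\ell$ the behaviour splits. For $n>1$ the rotation part stabilises at $\langle a^n\rangle$ (again via Lemma \ref{l10}, using that $2$ is invertible mod the odd part), and $\langle a^2\rangle$, of order $2^{\ell-1}n$, can never sit inside a centre of order at most $2^\ell$, so reflections are excluded forever and $Z_u=\langle a^n\rangle$ for all $u\geq\ell$. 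For $n=1$ we have $Z_{\ell-1}=\langle a^2\rangle$, so the reflection criterion is finally satisfied while simultaneously every rotation qualifies, giving $Z_\ell=D_m$ and a stationary series thereafter.

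I expect the main obstacle to be the reflection bookkeeping rather than the rotation recursion: one must argue uniformly that $\langle a^2\rangle\subseteq Z_{u-1}$ fails at every level except precisely when $m$ is a $2$-group and $u=\ell$, as this is exactly what separates the nilpotent case ($Z_\ell=D_m$) from the non-nilpotent even case (the series freezing at $\langle a^n\rangle<D_m$). The subsidiary care is number-theoretic, namely that each halving of the modulus must be justified by Lemma \ref{l10} rather than naive cancellation, and that the boundary indices $u=0$, $u=\ell-1$, and $u=\ell$ be checked against the two-branch formula.
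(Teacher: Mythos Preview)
Your proof is correct. Note that the paper does not actually supply a proof of Theorem~\ref{t14}: the authors declare the result ``well-known and proved routinely'' and close with a \qed, so there is nothing to compare your approach against. Your argument---reducing to the recursion $g\in Z_u\Leftrightarrow[g,x]\in Z_{u-1}$ for all $x$, extracting from Lemma~\ref{l1} the two criteria $a^i\in Z_u\Leftrightarrow a^{2i}\in Z_{u-1}$ and $a^ib\in Z_u\Leftrightarrow\langle a^2\rangle\subseteq Z_{u-1}$, and then inducting with Lemma~\ref{l10} governing the halving of moduli---is exactly the routine verification the paper alludes to, and the boundary cases ($u=0$, $u=\ell$, the nilpotent versus non-nilpotent split via the cardinality obstruction $|\langle a^2\rangle|=2^{\ell-1}n>2^\ell$ when $n>1$) are all handled correctly.
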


We will adopt exponential notation for repeated composition of mappings. For any mapping $ \mu $ and any $ t \in {\mathbb{Z}^ + } $, we will write $ {\mu ^t} = \mu  \circ \mu  \circ  \cdots  \circ \mu \;(t{\text{ times)}} $, and $ {\mu ^0} $ for the identity mapping on $ {D_m} $. The following lemma takes us back from the $ \mu  {\text{-maps}} $ in our containers to the $ \rho {\text{-maps}} $ they represent. 

\begin{lemma}
\label{l15}
For $ u > 0 $ and $ x \in {\mathbb{Z}_m} $,
\begin{enumerate}[(i)] 
	\item $ \mu ({( - 2)^u},x{( - 2)^{u - 1}}) = \rho ({a^{ - x}}b) \circ \rho {(b)^{u - 1}} ,$ 	
	\item $ \mu ({2^u},x{2^{u - 1}}) = \lambda ({a^{ - x}}b) \circ \lambda {(b)^{u - 1}}. $
\end{enumerate}
\end{lemma}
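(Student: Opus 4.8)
The plan is to do the whole computation inside the semigroup of $\mu$-maps: use Lemma \ref{l2} to rewrite each $\rho$- and $\lambda$-map on the right-hand side as a $\mu$-map, and use Lemma \ref{l3} as the multiplication rule. Once every factor is in the form $\mu(A,B)$, both identities reduce to purely formal manipulations of the coordinates $A,B \in \mathbb{Z}_m$.

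For part \emph{(i)}, I would first record the $\mu$-forms of the two generators that appear. Taking $s=1$, so that $\beta_1 = -2$ and $\alpha_1 = -1$, Lemma \ref{l2}\emph{(i)} gives $\rho(a^{-x}b) = \mu(\beta_1,(-x)\alpha_1) = \mu(-2,x)$ and $\rho(b) = \mu(\beta_1,0\cdot\alpha_1) = \mu(-2,0)$. I would then show, by induction on the exponent using Lemma \ref{l3}, that $\rho(b)^{k} = \mu((-2)^{k},0)$ for $k \geq 1$; the inductive step is immediate, since $\mu((-2)^{k},0)\circ\mu(-2,0) = \mu((-2)^{k+1},0)$. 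Finally, one application of Lemma \ref{l3} to $\mu(-2,x)\circ\mu((-2)^{u-1},0)$ yields $\mu((-2)^{u},x(-2)^{u-1})$, which is exactly the asserted left-hand side (the case $u=1$ being the reduction $\rho(a^{-x}b) = \mu(-2,x)$ directly).

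Part \emph{(ii)} runs in complete parallel through Lemma \ref{l2}\emph{(ii)}: here $\lambda(b) = \mu(-\beta_1,0) = \mu(2,0)$, so $\lambda(b)^{k} = \mu(2^{k},0)$, while $\lambda(a^{-x}b) = \mu(-\beta_1,-(-x)\alpha_1)$. Composing the two with Lemma \ref{l3} produces a map $\mu(2^{u},c\,2^{u-1})$, whose second coordinate $c$ is determined by the sign of the entry $-(-x)\alpha_1$.

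The one step I would not treat as routine --- and the place where an error most easily creeps in --- is the sign bookkeeping in the second coordinate, where the two minus signs built into Lemma \ref{l2}\emph{(ii)} interact with $\alpha_1 = -1$ and $\beta_1 = -2$. In part \emph{(i)} the factor $\alpha_1 = -1$ turns the input exponent $-x$ into the coordinate $+x$ automatically, but in part \emph{(ii)} the extra sign from the $-r\alpha_s$ slot of Lemma \ref{l2}\emph{(ii)} must be tracked explicitly rather than inferred by analogy. I would therefore verify the coordinate $c$ directly, being careful about whether the correct pairing is with $a^{-x}b$ or $a^{x}b$; at the level of the full indexed families this distinction is invisible, since $x \mapsto -x$ is a bijection of $\mathbb{Z}_m$ (and the families index the containers $\mathcal{C}(2^{u},2^{u-1})$ exactly), but for the pointwise equality of mappings claimed here the sign must be matched on the nose.
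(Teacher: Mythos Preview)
Your approach is essentially the paper's: convert each $\rho$- and $\lambda$-factor to a $\mu$-map via Lemma~\ref{l2} and compose via Lemma~\ref{l3} (the paper additionally evaluates both sides on a generic element $a^{i}b^{j}$, but that step is redundant once Lemma~\ref{l3} is available). Your caution about the sign in part~\emph{(ii)} is warranted: carrying the computation through gives $\lambda(a^{-x}b)=\mu(2,-x)$, so the identity as stated pairs with $a^{x}b$ rather than $a^{-x}b$---a slip the paper's ``similarly'' conceals but which, as you observe, is invisible at the level of the containers $\mathcal{C}(2^{u},2^{u-1})$ where the lemma is actually used.
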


\begin{proof}
To prove \emph{(i)} we will apply both maps to an arbitrary $ {a^i}{b^j} \in {D_m} $. Note from Lemma \ref{l2} that,
 $ \rho ({a^{ - x}}b) = \mu ({\beta _1}, - x{\alpha _1}) = \mu ( - 2,x) $. Therefore, $ ({a^i}{b^j})\rho ({a^{ - x}}b) = ({a^i}{b^j})\mu ( - 2,x) = {a^N} $ with $ N =  - 2i{\alpha _j} - x{\beta _j} $. Note also that $ \rho (b) = \mu ( - 2,0) $. Thus, by use of Lemma \ref{l3} repeatedly, we have $ ({a^i}{b^j})\rho ({a^{ - x}}b) \circ \rho {(b)^{u - 1}} = {a^{N'}} $ where $ N' = N{( - 2)^{u - 1}} $. Now $ ({a^i}{b^j})\mu ({( - 2)^u},x{( - 2)^{u - 1}}) = {a^{N''}} $ where $ N'' = {( - 2)^u}i{\alpha _j} - x{( - 2)^{u - 1}}{\beta _j} = ( - 2i{\alpha _j} - x{\beta _j}){( - 2)^{u - 1}} = N{( - 2)^{u - 1}} = N' $ and the result follows. The proof of \emph{(ii)} follows similarly.
\end{proof}

A group $G$ is metabelian (or solvable of length 2) if its commutator subgroup, $ G' $, is abelian. The dihedral groups are metabelian since $ {D'_m} \leq \left\langle a \right\rangle $,which is cyclic, and hence abelian. The following commutator identities, which hold in any metabelian group, will be used in our arguments. 
\begin{samepage}
\begin{atheorem}{MCI} \mbox{}
\begin{enumerate}[(i)]
	\item $ [xy,{z_1},{z_2}, \ldots ,{z_n}] = [{[x,{z_1}]^y},{z_2}, \ldots ,{z_n}][y,{z_1},{z_2}, \ldots ,{z_n}], $ 
	\item $ [[x,y][u,v],{z_1},{z_2}, \ldots ,{z_n}] = [x,y,{z_1},{z_2}, \ldots ,{z_n}][u,v,{z_1},{z_2}, \ldots ,{z_n}] ,$ 
	\item $ [{[x,y]^{ - 1}},{z_1},{z_2}, \ldots ,{z_n}] = {[x,y,{z_1},{z_2}, \ldots ,{z_n}]^{ - 1}} .$ \qed
\end{enumerate}
\end{atheorem}
\end{samepage}
The following theorem is true for any metabelian group and, therefore, holds for $ {D_m} $.

\begin{theorem}
\label{t16}
Let $G$ be a metabelian group with $ u > 0 $ and $ {g_1},{g_2} \in G $, then 
\begin{equation}
\label{e46}
[{g_1},{x_1},{x_2}, \ldots ,{x_u}] = [{g_2},{x_1},{x_2}, \ldots ,{x_u}]
\end{equation}
for every $ {x_1},{x_2}, \ldots ,{x_u} \in G $ if and only if $ g_1^{ - 1}{g_2} \in {Z_u}(G) $.
\end{theorem}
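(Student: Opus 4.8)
The plan is to prove the statement by induction on $u$, reducing the weight-$u$ condition first to a weight-$(u-1)$ condition and then to a weight-one condition in a suitable quotient. Two preliminary observations, read straight off the definition of $Z_n$, drive everything. Since $[g,g_1,\ldots,g_n]=[[g,g_1],g_2,\ldots,g_n]$, the defining condition for $Z_n$ unwinds to the recursion ``$g\in Z_n$ iff $[g,g_1]\in Z_{n-1}$ for every $g_1\in G$''; feeding this into the definition of the centre of a quotient yields the standard identification $Z_u(G)/Z_{u-1}(G)=Z\bigl(G/Z_{u-1}(G)\bigr)$. The base case $u=1$ is then a direct calculation: $[g_1,x]=[g_2,x]$ for all $x$ is equivalent, after cancelling $x$, to $g_2g_1^{-1}$ (equivalently $g_1^{-1}g_2$) centralising every $x$, i.e. to $g_1^{-1}g_2\in Z(G)=Z_1$.

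For the inductive step I would assume the result for $u-1$. Fixing $x_1$ and writing each side of (\ref{e46}) as $[[g_i,x_1],x_2,\ldots,x_u]$, the inductive hypothesis applied to the pair $[g_1,x_1],[g_2,x_1]$ shows that (\ref{e46}) holds for all $(x_2,\ldots,x_u)$ precisely when $[g_1,x_1]^{-1}[g_2,x_1]\in Z_{u-1}$. Quantifying over $x_1$, condition (\ref{e46}) is therefore equivalent to the statement that $[g_1,x_1]^{-1}[g_2,x_1]\in Z_{u-1}$ for every $x_1\in G$. Passing to $\overline{G}=G/Z_{u-1}$, this says exactly that $[\overline{g_1},\overline{x_1}]=[\overline{g_2},\overline{x_1}]$ for all $\overline{x_1}\in\overline{G}$, which by the base case applied inside $\overline{G}$ is equivalent to $\overline{g_1}^{\,-1}\overline{g_2}\in Z(\overline{G})=Z_u/Z_{u-1}$, that is, to $g_1^{-1}g_2\in Z_u$. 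This closes the induction, and both implications come out at once.

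The delicate part is the forward implication, and it is worth flagging why the quotient step is the right move. The tempting elementary route is to expand $[g_1,x_1]^{-1}[g_2,x_1]=[g_1,x_1,h]\,[h,x_1]$ with $h=g_1^{-1}g_2$ (using CI~\emph{(iii)}) and to argue factor by factor. The factor $[h,x_1]$ is harmless, but controlling the cross term $[g_1,x_1,h]$ appears to require knowing $h\in Z_u$ in advance, which is circular. The passage to $G/Z_{u-1}$ is exactly what dissolves this difficulty: in the quotient the reduced condition is a genuine weight-one ($u=1$) statement, so the already-proved base case settles both directions simultaneously and the cross term is never analysed on its own. I expect this quotient step to be the crux; the remainder is the routine recursion for $Z_n$ together with the weight-one computation. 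I note in passing that only CI~\emph{(iii)} is invoked, so the argument in fact goes through for an arbitrary group, with metabelianness of $D_m$ playing no essential role here.
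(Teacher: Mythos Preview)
Your proof is correct, and it takes a genuinely different route from the paper's. The paper argues each direction separately by brute-force commutator expansion: for $(\Rightarrow)$ it expands both $[g_1^{-1}g_2,x_1,\ldots,x_u]$ and $1=[g_1^{-1}g_1,x_1,\ldots,x_u]$ using CI~\emph{(iii)} and the metabelian identities MCI~\emph{(ii)},~\emph{(iii)}, then matches factors; for $(\Leftarrow)$ it unwinds $1=[g_1^{-1}g_2,x_1,\ldots,x_u]$ through a chain of CI and MCI manipulations. Your induction-and-quotient argument bypasses all of this: the inductive hypothesis reduces the weight-$u$ condition to $[g_1,x_1]^{-1}[g_2,x_1]\in Z_{u-1}$ for every $x_1$, and reading that in $G/Z_{u-1}$ turns it into the $u=1$ case there, which the identification $Z(G/Z_{u-1})=Z_u/Z_{u-1}$ finishes. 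This is shorter, handles both implications at once, and---as you observe---never touches the MCI identities, so the metabelian hypothesis is in fact superfluous for this theorem. The paper's approach, by contrast, genuinely leans on MCI~\emph{(ii)} and~\emph{(iii)} to split and invert left-normed commutators, so its argument as written does require metabelianness. One small quibble: your closing remark that ``only CI~\emph{(iii)} is invoked'' is slightly off, since CI~\emph{(iii)} appears only in the elementary route you abandon; the quotient argument you actually run uses nothing beyond the definitions and the standard recursion for the upper central series.
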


\begin{proof}
$ ( \Rightarrow ) $ To show that $ g_1^{ - 1}{g_2} \in {Z_u}(G) $ it suffices to verify that
\[
[g_1^{ - 1}{g_2},{x_1},{x_2}, \ldots ,{x_u}] = 1 . 
\]
Therefore,
\begin{align*}
	 [g_1^{ - 1}{g_2},{x_1},{x_2}, \ldots ,{x_u}] &= [[g_1^{ - 1},{x_1}][g_1^{ - 1},{x_1},{g_2}][{g_2},{x_1}],{x_2}, \ldots ,{x_u}]\mbox{ by CI (iii)} \\
	&= [g_1^{ - 1},{x_1},{x_2}, \ldots ,{x_u}][[g_1^{ - 1},{x_1}],{g_2},{x_2}, \ldots ,{x_u}][{g_2},{x_1},{x_2}, \ldots ,{x_u} \mbox{ by MCI (ii)}\\
	&= [g_1^{ - 1},{x_1},{x_2}, \ldots ,{x_u}]{[{g_2},[g_1^{ - 1},{x_1}],{x_2}, \ldots ,{x_u}]^{ - 1}}[{g_2},{x_1},{x_2}, \ldots ,{x_u}] \\
	&\quad\mbox{ by CI (ii) and MCI (iii)}. 
\end{align*}
Similarly,
\begin{align*}
	1 &= [g_1^{ - 1}{g_1},{x_1},{x_2}, \ldots ,{x_u}] = [[g_1^{ - 1},{x_1}][g_1^{ - 1},{x_1},{g_1}][{g_1},{x_1}],{x_2}, \ldots ,{x_u}] \\
	&= [g_1^{ - 1},{x_1},{x_2}, \ldots ,{x_u}][g_1^{ - 1},{x_1},{g_1},{x_2}, \ldots ,{x_u}][{g_1},{x_1},{x_2}, \ldots ,{x_u}]\\ 
	&= [g_1^{ - 1},{x_1},{x_2}, \ldots ,{x_u}]{[{g_1},[g_1^{ - 1},{x_1}],{x_2}, \ldots ,{x_u}]^{ - 1}}[{g_1},{x_1},{x_2}, \ldots ,{x_u}] .
\end{align*}
Comparing the last lines of these two calculations, we see that the first factors are identical, the second factors are equal using the hypothesis (\ref{e46}) with $ {x_1} $ replaced by $ [g_1^{ - 1},{x_1}] $, and the third factors are equal by (\ref{e46}). Thus we obtain $ [g_1^{ - 1}{g_2},{x_1},{x_2}, \ldots ,{x_u}] = 1 $, and the implication follows.

$ \left(  \Leftarrow  \right) $ Since $ g_1^{ - 1}{g_2} \in {Z_u}(G) $, we know that for every $ {x_1},{x_2}, \ldots ,{x_u} \in G $, $ [g_1^{ - 1}{g_2},{x_1},{x_2}, \ldots ,{x_u}] = 1 $. Therefore,
\begin{align*}
	1 &= [g_1^{ - 1}{g_2},{x_1},{x_2}, \ldots ,{x_u}] \\ 		
	&= [{[g_1^{ - 1},{x_1}]^{{g_2}}},{x_2}, \ldots ,{x_u}][[{g_2},{x_1}],{x_2}, \ldots ,{x_u}]\mbox{ by MCI (i)}\\
	&= [{[{g_1},{x_1}]^{ - g_1^{ - 1}{g_2}}},{x_2}, \ldots ,{x_u}][{g_2},{x_1},{x_2}, \ldots ,{x_u}]\mbox{ by CI (v)}\\
	&= {[{[{g_1},{x_1}]^{g_1^{ - 1}{g_2}}},{x_2}, \ldots ,{x_u}]^{ - 1}}[{g_2},{x_1},{x_2}, \ldots ,{x_u}]\mbox{ by MCI (iii)}\\
	&= {[[{g_1},{x_1}][{g_1},{x_1},g_1^{ - 1}{g_2}],{x_2}, \ldots ,{x_u}]^{ - 1}}[{g_2},{x_1},{x_2}, \ldots ,{x_u}]\mbox{ by CI (i)}\\
	&= {([[{g_1},{x_1}],{x_2}, \ldots ,{x_u}][[[{g_1},{x_1}],g_1^{ - 1}{g_2}],{x_2}, \ldots ,{x_u}])^{ - 1}}[{g_2},{x_1},{x_2}, \ldots ,{x_u}]\mbox{ by MCI (ii)}\\
	&= ([[{g_1},{x_1}],{x_2}, \ldots ,{x_u}]{[[g_1^{ - 1}{g_2},[{g_1},{x_1}],{x_2}, \ldots ,{x_u}])^{ - 1}}[{g_2},{x_1},{x_2}, \ldots ,{x_u}]\mbox{ by CI(ii) and MCI(iii)}\\
\end{align*}
Since $ g_1^{ - 1}{g_2} \in {Z_u}(G) $, we know that the middle term in the last line is trivial. Thus we have
			 \[1 = {[{g_1},{x_1},{x_2}, \ldots ,{x_u}]^{ - 1}}[{g_2},{x_1},{x_2}, \ldots ,{x_u}].\]
Therefore, $ [{g_1},{x_1},{x_2}, \ldots ,{x_u}] = [{g_2},{x_1},{x_2}, \ldots ,{x_u}] $, as required.
\end{proof}

\begin{lemma}
\label{l13}
For $ u > 0 $ and $ 1 \leq j \leq u $ let $ i,{r_j} \in {\mathbb{Z}_m},\;{s_j} \in {\mathbb{Z}_2} $, then
 \[[{a^i},{a^{{r_1}}}{b^{{s_1}}},{a^{{r_2}}}{b^{{s_2}}}, \ldots ,{a^{{r_u}}}{b^{{s_u}}}] = \left\{ {\begin{array}{*{20}{c}}
  {{\text{  1                  if some }}{s_k} = 0} \\ 
  {[{a^i},(u - 1)b]{\text{   if all }}{s_k} = 1{\text{ }}} 
\end{array}} \right\}.
\]
Note that if all $ {s_k} = 1 $, then 
\[[{a^i},(u - 1)b] = {a^N}{\text{ with }}N \equiv {( - 2)^u}i.\]
\end{lemma}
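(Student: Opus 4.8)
The plan is to peel the weight-$(u+1)$ left-normed commutator down to its innermost weight-two commutator and then iterate, with an induction on $u$. The engine of the whole argument is a single computation of $[a^M,a^rb^s]$ for $a^M\in\langle a\rangle$, which I would read off directly from Lemma \ref{l1}: taking the first argument to be $a^M=a^Mb^0$ (so $j=0$, whence $\alpha_0=1$ and $\beta_0=0$) gives $[a^M,a^rb^s]=(a^M)\rho(a^rb^s)=a^{M\beta_s}$. Since $\beta_0=0$ and $\beta_1=-2$, this reads
\[
[a^M,a^rb^s]=\begin{cases} 1 & \text{if } s=0,\\ a^{-2M} & \text{if } s=1,\end{cases}
\]
and two features are decisive: the value is \emph{independent of} $r$, and it again lies in $\langle a\rangle$. (Equivalently, $[a^M,a^r]=1$ because $\langle a\rangle$ is abelian and normal, while $[a^M,a^rb]=a^{-2M}$.)

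With this base step in hand I would set $c_0=a^i$ and $c_k=[c_{k-1},a^{r_k}b^{s_k}]$, so that the commutator in the statement is exactly $c_u$, and argue by induction on $u$. The base case $u=1$ is the displayed formula. For the inductive step, observe first that every $c_k$ is a power of $a$, so the weight-two formula can legitimately be reapplied at each stage. There are then three mutually exhaustive cases. If some $s_k=0$ with $k\le u-1$, then $c_{u-1}=1$ by the inductive hypothesis, and $c_u=[1,a^{r_u}b^{s_u}]=1$. If all earlier $s_k=1$ but $s_u=0$, then $c_{u-1}=a^{(-2)^{u-1}i}$ and $c_u=[c_{u-1},a^{r_u}]=1$. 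Finally, if every $s_k=1$, then $c_{u-1}=a^{(-2)^{u-1}i}$ and $c_u=[a^{(-2)^{u-1}i},a^{r_u}b]=a^{-2\cdot(-2)^{u-1}i}=a^{(-2)^u i}$. This reproduces exactly the two-way split of the statement.

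For the concluding Note in the all-$s_k=1$ case, I would invoke the $r$-independence recorded above: since $[a^M,a^rb]=a^{-2M}=[a^M,b]$ for every $r$, each entry $a^{r_k}b$ may be replaced by $b$ without altering the value. The left-normed commutator thus collapses to a repeated-entry commutator of $a^i$ against copies of $b$, and tracking the factor $-2$ contributed at each of the $u$ stages gives $N\equiv(-2)^u i\pmod m$, as asserted.

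All of the computations are routine once the weight-two identity is established, so I do not anticipate a serious obstacle; the only points needing care are the bookkeeping that keeps each partial commutator $c_k$ inside $\langle a\rangle$ (so that Lemma \ref{l1} may be reapplied) and the observation that a single $s_k=0$ anywhere along the way forces the value $1$. I expect the base identity $[a^M,a^rb^s]=a^{M\beta_s}$ — and in particular its independence of $r$ — to be the load-bearing step on which everything else rests.
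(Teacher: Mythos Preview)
Your proposal is correct and follows essentially the same line as the paper's proof: both hinge on the weight-two identity $[a^M,a^rb^s]=a^{M\beta_s}$ (you obtain it from Lemma~\ref{l1}, the paper from CI\emph{(iv)}), and both then iterate to get the exponent as a product of the $\beta_{s_k}$. The only cosmetic difference is that the paper records the iteration in one stroke as $N=\beta_{s_1}\cdots\beta_{s_u}\,i$ and reads off both cases from the vanishing or nonvanishing of the factors, whereas you unwind it as a formal induction with a three-way case split; the content is the same.
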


\begin{proof}
Note first that for $ i,r \in {\mathbb{Z}_m},\;s \in {\mathbb{Z}_2} $, CI\emph{(iv)} implies that $ [{a^i},{a^r}{b^s}] = [{a^i},{b^s}]{[{a^i},{a^r}]^{{b^s}}} = [{a^i},{b^s}] $.
It follows that \[[{a^i},{a^{{r_1}}}{b^{{s_1}}}] = [{a^i},{b^{{s_1}}}] = {a^{ - i}}{({a^i})^{{b^{{s_1}}}}} = {a^{ - i}}{({a^i})^{{{( - 1)}^{{s_1}}}}} = {a^{{\beta _{{s_1}}}i}}.\] Similarly,
\[
 [{a^i},{a^{{r_1}}}{b^{{s_1}}},{a^{{r_2}}}{b^{{s_2}}}] = [{a^{{\beta _{{s_1}}}i}},{a^{{r_2}}}{b^{{s_2}}}] = [{a^{{\beta _{{s_1}}}i}},{b^{{s_2}}}] = {a^{{\beta _{{s_1}}}{\beta _{{s_2}}}i}} ,
\]
and, inductively, $ [{a^i},{a^r}{b^s},{a^{{r_1}}}{b^{{s_1}}}, \ldots ,{a^{{r_{u - 1}}}}{b^{{s_{u - 1}}}}] = {a^N} $ with $ N = {\beta _{{s_1}}} \cdots {\beta _{{s_u}}}i $. If some $ {s_j} = 0 $, then $ {\beta _{{s_j}}} = 0 $, and it follows that $ N \equiv 0\;(\bmod m) $. In this case we have $ {a^N} = 1 $ as stated in the lemma. Otherwise, $ {s_j} = 1 $ for each $ j $. Thus, $ {\beta _j} =  - 2 $ for each $j$, and therefore $ {a^N} = {( - 2)^u}i $, as required.
\end{proof}

\begin{lemma}
\label{c17}
Let $ m \geq 3 $, and $ u > 0 $ and $ {g_1},{g_2} \in {D_m} $, then 
\begin{equation}
\label{e49}
[{g_1},{x_1},{x_2}, \ldots ,{x_u}] = [{g_2},{x_1},{x_2}, \ldots ,{x_u}] \mbox{ for every } {x_1},{x_2}, \ldots ,{x_u} \in {D_m}
\end{equation}
if and only if
\begin{equation}
\label{e410}
\rho ({g_1}) \circ \rho {(b)^{u - 1}} = \rho ({g_2}) \circ \rho {(b)^{u - 1}}.
\end{equation}
\end{lemma}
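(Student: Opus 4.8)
The plan is to reduce both sides of the claimed equivalence to a single pointwise statement about iterated commutators and then compare them directly. First I would unwind the map equation~(\ref{e410}). Since $(x)\rho(g)=[x,g]$ and each further application of $\rho(b)$ appends one more entry $b$, for every $x\in D_m$ we have
\[
(x)\bigl(\rho(g)\circ\rho(b)^{u-1}\bigr)=[x,g,(u-1)b].
\]
Hence (\ref{e410}) is equivalent to the pointwise condition $[x,g_1,(u-1)b]=[x,g_2,(u-1)b]$ for all $x\in D_m$. Applying CI (ii) to write $[x,g]=[g,x]^{-1}$ and then MCI (iii) to pull the inverse outside the iterated commutator gives $[x,g,(u-1)b]=[g,x,(u-1)b]^{-1}$; since inversion is a bijection, (\ref{e410}) is therefore equivalent to the condition, call it $(\ast)$, that $[g_1,x,(u-1)b]=[g_2,x,(u-1)b]$ for all $x\in D_m$. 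This has the same shape as (\ref{e49}), but with the last $u-1$ entries frozen at $b$.

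The implication (\ref{e49})$\Rightarrow(\ast)$, and hence (\ref{e49})$\Rightarrow$(\ref{e410}), is immediate: simply specialize $x_1=x$ and $x_2=\cdots=x_u=b$ in (\ref{e49}). The real content of the lemma is the reverse implication $(\ast)\Rightarrow$(\ref{e49}), i.e.\ that freezing the trailing entries at $b$ loses no information.

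For this I would evaluate a general left-normed commutator explicitly. Writing $g=a^{p}b^{c}$ and $x_1=a^{r_1}b^{s_1}$, Lemma~\ref{l1} gives $[g,x_1]=a^{M}$ for an explicit $M\in\mathbb{Z}_m$; feeding this power of $a$ into Lemma~\ref{l13} (applied, when $u\geq 2$, to the remaining entries $x_2,\dots,x_u$) yields
\[
[g,x_1,x_2,\dots,x_u]=a^{M\,\beta_{s_2}\cdots\beta_{s_u}}.
\]
The decisive observation is that each factor $\beta_{s_k}$ is either $0$ (when $x_k$ is a rotation, $s_k=0$) or $-2$ (when $x_k$ is a reflection, $s_k=1$), so the product $\beta_{s_2}\cdots\beta_{s_u}$ takes only the values $0$ and $(-2)^{u-1}$, with nothing in between. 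When it is $0$ both sides of (\ref{e49}) collapse to $1$ and equality is automatic; when it is $(-2)^{u-1}$, that is $s_2=\cdots=s_u=1$, the identity to be checked is $M_1(-2)^{u-1}\equiv M_2(-2)^{u-1}\pmod m$, where $a^{M_1}=[g_1,x_1]$ and $a^{M_2}=[g_2,x_1]$ --- and this is exactly what $(\ast)$ asserts (with $x=x_1$). Thus $(\ast)$ forces (\ref{e49}), completing the equivalence; for $u=1$ the trailing product is empty and the two conditions coincide outright.

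The main obstacle is precisely this reverse direction: a priori (\ref{e49}) demands equality over all $u$-tuples of entries, whereas (\ref{e410}) only tests the single distinguished tuple whose trailing entries are $b$, so one must show that no generality is lost. Lemma~\ref{l13} is what makes this work, since it shows that after the first bracket each entry affects the outcome only through whether it is a rotation or a reflection, and the reflection case --- the only binding one --- is exactly the case encoded by $\rho(b)^{u-1}$. Finally, I note that Theorem~\ref{t16} reads (\ref{e49}) as the membership $g_1^{-1}g_2\in Z_u(D_m)$; combining it with the equivalence just established recasts (\ref{e410}) in terms of the upper central series, which is the form wanted in the sequel.
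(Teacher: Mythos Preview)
Your proof is correct and follows essentially the same route as the paper's: both directions are handled by unwinding $(x)\rho(g)\circ\rho(b)^{u-1}=[x,g,(u-1)b]$, converting via CI~(ii) and MCI~(iii) to the $[g,x,(u-1)b]$ form, and for the reverse implication invoking Lemma~\ref{l13} to show that after the first bracket only the parities $s_2,\dots,s_u$ matter, so the all-$b$ case is the only nontrivial one. Your presentation is slightly tidier in isolating the intermediate condition~$(\ast)$ and treating $u=1$ explicitly, but the argument is the same.
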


\begin{proof}
$ \left(  \Rightarrow  \right) $ Let $ x \in {D_m} $, then
\begin{align*}
	&(x)\rho ({g_1}) \circ \rho {(b)^{u - 1}} \\ 
	=& [x,{g_1},(u - 1)b] \\ 
	=& {[{g_1},x,(u - 1)b]^{ - 1}} \mbox{ by CI (ii) and MCI (iii)}\\
	=& {[{g_2},x,(u - 1)b]^{ - 1}} \mbox{ by (\ref{e49})}\\
	=& [x,{g_2},(u - 1)b] \mbox{ by CI (ii) and MCI (iii)}\\
	=& (x)\rho ({g_2}) \circ \rho {(b)^{u - 1}} .
\end{align*}
$ \left(  \Leftarrow  \right) $ Letting $ [{g_1},{x_1}] = {a^i} $ for some $ i \in {\mathbb{Z}_m} $ and writing each $ {x_k} $ as $ {a^{{r_k}}}{b^{{s_k}}} $, we obtain  
\begin{align*}
[{g_1},{x_1},{x_2}, \ldots ,{x_u}] &= [{a^i},{a^{{r_2}}}{b^{{s_2}}}, \ldots ,{a^{{r_u}}}{b^{{s_u}}}] \\ 	
	&= \left\{ {\begin{array}{*{20}{c}}
 	 {1{\text{          if some }}{s_k} = 0} \\ 
 	 {[{a^i},(u - 1)b]{\text{ if }}{s_k} = 1{\text{ }}\left( {2 \leq k \leq u} \right)} 
\end{array}} \right\} ,
\end{align*}
by Lemma \ref{l13}. Similarly, if $ [{g_2},{x_1}] = {a^j} $, then 
\[
[{g_2},{x_1},{x_2}, \ldots ,{x_u}] = \left\{ {\begin{array}{*{20}{c}}
  {1{\text{          if some }}{s_k} = 0} \\ 
  {[{a^j},(u - 1)b]{\text{ if }}{s_k} = 1{\text{ }}\left( {2 \leq k \leq u} \right)} 
\end{array}} \right\} .
\]
If $ {s_k}{\kern 1pt}  = 0 $ for some $ k $, then both expressions equal 1, and (\ref{e49}) holds.  Thus we may suppose that $ {s_k}{\kern 1pt}  = 1 $ for $ 2 \leq k \leq u $. Here the equation in (\ref{e49}) becomes $ [{a^i},(u - 1)b] = [{a^j},(u - 1)b] $. Replacing $ {a^i}{\text{ by }}[{x_1},{g_1}] $ and $ {a^j}{\text{ by }}[{x_1},{g_2}] $, we obtain, $ {[[{x_1},{g_1}],(u - 1)b]^{ - 1}} = {[[{x_1},{g_2}],(u - 1)b]^{ - 1}} $. This is equivalent to $ ({x_1})(\rho ({g_1}) \circ \rho {(b)^{u - 1}}) = ({x_1})(\rho ({g_2}) \circ \rho {(b)^{u - 1}}) $ which then follows from our assumption (\ref{e410}).
\end{proof}

\begin{corollary}
\label{c18}
If $ u > 0 $, $ {Z_u}({D_m}) \leq \left\langle a \right\rangle $, and $ x,y \in {Z_m} $, then 
 $ \rho ({a^x}b) \circ {\left( {\rho (b)} \right)^{u - 1}} = \rho ({a^y}b) \circ {\left( {\rho (b)} \right)^{u - 1}} $ if and only if $ {a^x}{Z_u} = {a^y}{Z_u} $ in the quotient group $ {{\left\langle a \right\rangle } \mathord{\left/
 {\vphantom {{\left\langle a \right\rangle } {{Z_u}}}} \right.
 \kern-\nulldelimiterspace} {{Z_u}}} $.
\end{corollary}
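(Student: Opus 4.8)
The plan is to chain together the two immediately preceding results, Lemma \ref{c17} and Theorem \ref{t16}, specialized to the reflections $g_1 = a^x b$ and $g_2 = a^y b$. First I would invoke Lemma \ref{c17} with exactly these choices of $g_1$ and $g_2$. Since $a^x b, a^y b \in D_m$, it tells us at once that the mapping equation $\rho(a^x b) \circ (\rho(b))^{u-1} = \rho(a^y b) \circ (\rho(b))^{u-1}$ is equivalent to the commutator condition $[a^x b, x_1, x_2, \ldots, x_u] = [a^y b, x_1, x_2, \ldots, x_u]$ holding for all $x_1, x_2, \ldots, x_u \in D_m$.

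Next, since $D_m$ is metabelian, Theorem \ref{t16} applies with $G = D_m$ and the same $g_1, g_2$, so this commutator condition is in turn equivalent to $(a^x b)^{-1}(a^y b) \in Z_u(D_m)$. At this stage the argument becomes a purely computational one inside $D_m$: using $b^2 = 1$ and $a^b = a^{-1}$ (equivalently $b a^k = a^{-k} b$ for every integer $k$), one finds that each reflection is an involution, $(a^x b)^{-1} = a^x b$, whence $(a^x b)^{-1}(a^y b) = (a^x b)(a^y b) = a^{x-y}$. Thus the membership condition collapses to $a^{x-y} \in Z_u(D_m)$.

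Finally, the hypothesis $Z_u(D_m) \leq \langle a \rangle$ guarantees that the quotient $\langle a \rangle / Z_u$ is well defined and that $a^{x-y} \in Z_u$ is precisely the assertion that $a^x$ and $a^y$ determine the same coset, i.e. $a^x Z_u = a^y Z_u$. Assembling the three equivalences then yields the corollary.

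I do not anticipate a serious obstacle here: the real content is already carried by Lemma \ref{c17} and Theorem \ref{t16}, and the only genuinely new ingredient is the elementary reflection identity $(a^x b)^{-1}(a^y b) = a^{x-y}$. The one point that will require care is confirming that the passage to cosets is faithful, which is exactly where the hypothesis $Z_u(D_m) \leq \langle a \rangle$ is used: without it, $a^{x-y} \in Z_u$ could not be rephrased as a coset equality in $\langle a \rangle / Z_u$, since that quotient would not even be available.
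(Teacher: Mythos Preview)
Your proposal is correct and follows essentially the same route as the paper: set $g_1=a^xb$, $g_2=a^yb$, chain Lemma~\ref{c17} with Theorem~\ref{t16}, compute $g_1^{-1}g_2$ as a power of $a$, and then use $Z_u\le\langle a\rangle$ to interpret membership as a coset equality. The only cosmetic difference is that you obtain $a^{x-y}$ via the involution identity $(a^xb)^{-1}=a^xb$, whereas the paper conjugates to get $a^{y-x}$; since $Z_u$ is a subgroup this is immaterial.
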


\begin{proof}
Note first that $ {Z_u} \triangleleft G $ and, therefore, if $ {Z_u} \leq \left\langle a \right\rangle $, then $ {Z_u} \triangleleft \left\langle a \right\rangle $ and the quotient group $ {{\left\langle a \right\rangle } \mathord{\left/
 {\vphantom {{\left\langle a \right\rangle } {{Z_u}}}} \right.
 \kern-\nulldelimiterspace} {{Z_u}}} $ exists. Letting $ {g_1} = {a^x}b $ and $ {g_2} = {a^y}b $, we have $ g_1^{ - 1}{g_2} = {({a^x}b)^{ - 1}}({a^y}b) = {b^{ - 1}}{a^{ - x + y}}b = {a^{y - x}} $. Thus, from Theorem \ref{t16} and Lemma \ref{c17}, $ \rho ({a^x}b) \circ {\left( {\rho (b)} \right)^{u - 1}} = \rho ({a^y}b) \circ {\left( {\rho (b)} \right)^{u - 1}} $ if and only if $ {a^{y - x}} \in {Z_u} $. This is equivalent to saying that $ {a^x}{Z_u} = {a^y}{Z_u} $ in the quotient group $ {{\left\langle a \right\rangle } \mathord{\left/
 {\vphantom {{\left\langle a \right\rangle } {{Z_u}}}} \right.
 \kern-\nulldelimiterspace} {{Z_u}}} $.
\end{proof}

We are now able to calculate the cardinality of the containers used in producing $ {\rm P}({D_m}) $ and $ \Lambda ({D_m}) $.

\begin{theorem}
\label{t19}
If $ u > 0 $ and $ {Z_u}({D_m}) \leq \left\langle a \right\rangle $, then 
\begin{enumerate}[(i)]
	\item $ \left| {\mathcal{C}({{( - 2)}^u},{{( - 2)}^{u - 1}})} \right| = \dfrac{m}{{\left| {{Z_u}({D_m})} \right|}} ,$ 
	\item $ \left| {\mathcal{C}({2^u},{2^{u - 1}})} \right| = \dfrac{m}{{\left| {{Z_u}({D_m})} \right|}} $.
\end{enumerate}
\end{theorem}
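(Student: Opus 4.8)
The plan is to reduce both equalities to a coset count in the quotient group $\langle a\rangle/Z_u(D_m)$, using the dictionary between $\mu$-maps and $\rho$-maps from Lemma \ref{l15} together with the equality criterion of Corollary \ref{c18}. For part (i), I would first rewrite the container: by definition $\mathcal{C}((-2)^u,(-2)^{u-1}) = \{\mu((-2)^u, x(-2)^{u-1}) : x\in\mathbb{Z}_m\}$, and Lemma \ref{l15}(i) identifies each such $\mu$-map with $\rho(a^{-x}b)\circ\rho(b)^{u-1}$. Since $x\mapsto -x$ is a bijection of $\mathbb{Z}_m$, the container equals $\{\rho(a^{x}b)\circ\rho(b)^{u-1} : x\in\mathbb{Z}_m\}$. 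The hypothesis $Z_u(D_m)\leq\langle a\rangle$ is exactly what permits the use of Corollary \ref{c18}, which says that for $x,y\in\mathbb{Z}_m$ the maps $\rho(a^{x}b)\circ\rho(b)^{u-1}$ and $\rho(a^{y}b)\circ\rho(b)^{u-1}$ coincide if and only if $a^{x}Z_u = a^{y}Z_u$ in $\langle a\rangle/Z_u$.

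It follows that $x\mapsto \rho(a^{x}b)\circ\rho(b)^{u-1}$ factors through $\langle a\rangle/Z_u$ and induces a bijection between the distinct elements of the container and the cosets $a^{x}Z_u$. As $x$ ranges over $\mathbb{Z}_m$, the element $a^{x}$ ranges over all of $\langle a\rangle$, so these cosets are precisely the elements of $\langle a\rangle/Z_u$. Since $a$ has order $m$ we have $|\langle a\rangle| = m$, and hence $|\mathcal{C}((-2)^u,(-2)^{u-1})| = [\langle a\rangle : Z_u] = m/|Z_u(D_m)|$, which is part (i).

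For part (ii) the cleanest route is to note that the cardinality of an arbitrary container is governed by a single congruence. Comparing the $\mu$-map formula $N_\mu = Ai\alpha_j - B\beta_j$ on the elements with $j=0$ (where $B$ drops out) and $j=1$ (where it contributes the term $2B$), one sees that $\mu(A,B_1) = \mu(A,B_2)$ if and only if $2B_1\equiv 2B_2\pmod m$; thus $|\mathcal{C}(A,B)|$ is the number of distinct residues $2xB\pmod m$, namely $m/\gcd(2B,m)$. As $2(-2)^{u-1}$ and $2\cdot 2^{u-1}$ both equal $\pm 2^{u}$, they share the same $\gcd$ with $m$, so $|\mathcal{C}(2^u,2^{u-1})| = |\mathcal{C}((-2)^u,(-2)^{u-1})|$, and part (i) completes the argument. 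Alternatively, I could mirror the entire $\rho$-chain on the $\lambda$-side, proving $\lambda$-analogues of Lemma \ref{c17} and Corollary \ref{c18} from Lemma \ref{l15}(ii), which is the route the paper's repeated ``the proof follows similarly'' suggests.

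The argument is essentially bookkeeping once Corollary \ref{c18} is available, so I do not anticipate a serious obstacle; the one point needing care is the role of the hypothesis $Z_u(D_m)\leq\langle a\rangle$. It is what guarantees the quotient $\langle a\rangle/Z_u$ is defined and that the coset count genuinely computes $m/|Z_u|$. (When $m=2^{\ell}$ and $u\geq\ell$ this hypothesis fails, since Theorem \ref{t14} gives $Z_u = D_m\not\leq\langle a\rangle$, which is exactly why those $u$ are excluded from the statement.) A secondary check is that the reindexing $x\mapsto -x$ and the surjectivity of $x\mapsto a^{x}$ onto $\langle a\rangle$ are applied correctly.
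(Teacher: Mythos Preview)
Your proof of part (i) is correct and essentially identical to the paper's: rewrite the container via Lemma~\ref{l15}(i), reindex by $x\mapsto -x$, and invoke Corollary~\ref{c18} to obtain the coset count $[\langle a\rangle : Z_u] = m/|Z_u|$.

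For part (ii) your primary argument differs from the paper's. You observe directly that $\mu(A,B_1)=\mu(A,B_2)$ iff $2B_1\equiv 2B_2\pmod m$, whence $|\mathcal{C}(A,B)| = m/\gcd(2B,m)$, and then note $\gcd(2\cdot(-2)^{u-1},m)=\gcd(2^u,m)$ to conclude the two containers have equal cardinality. The paper instead uses Lemma~\ref{l15}(ii) to express $\mathcal{C}(2^u,2^{u-1})$ as $\{\lambda(a^x b)\circ\lambda(b)^{u-1}:x\in\mathbb{Z}_m\}$ and then observes, via $N_\lambda=-N_\rho$ from Lemma~\ref{l1}, that each such $\lambda$-composite is $(-1)^u$ times the corresponding $\rho$-composite, so Corollary~\ref{c18} applies verbatim. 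Your route is more elementary and self-contained (it never needs the $\lambda$-dictionary or the sign relation) and in fact produces the general formula $|\mathcal{C}(A,B)|=m/\gcd(2B,m)$, which incidentally subsumes Lemma~\ref{l20}. The paper's route stays closer to the group-theoretic framework and keeps the connection to $Z_u$ visible throughout.
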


\begin{proof}
\begin{proof}[(i)]
By definition of container, 
\[\mathcal{C}({( - 2)^u},{( - 2)^{u - 1}}) = \left\{ {\mu ({{( - 2)}^u},x{{( - 2)}^{u - 1}}):x \in {\mathbb{Z}_m}} \right\} .
\]
If the domain of $x$ is $ {\mathbb{Z}_m} $, then the domain of $- x $ is also $ {\mathbb{Z}_m} $; thus, it follows by Lemma \ref{l15} that
\[
\left\{ {\mu ({{( - 2)}^u},x{{( - 2)}^{u - 1}}):x \in {\mathbb{Z}_m}} \right\} = \left\{ {\rho ({a^x}b) \circ \rho {{(b)}^{u - 1}}:x \in {\mathbb{Z}_m}} \right\} .
\]
Then it is clear that
\[
\left\{ {\rho ({a^x}b) \circ \rho {{(b)}^{u - 1}}:x \in {\mathbb{Z}_m}} \right\} = \left\{ {\rho ({a^x}b) \circ \rho {{(b)}^{u - 1}}:{a^x} \in \left\langle a \right\rangle } \right\} .
\]
Now, applying Corollary \ref{c18}, we see that the number of distinct mappings in this last set is the order of the quotient group $ \left| {{{\left\langle a \right\rangle } \mathord{\left/
 {\vphantom {{\left\langle a \right\rangle } {{Z_u}}}} \right.
 \kern-\nulldelimiterspace} {{Z_u}}}} \right| = \dfrac{m}{{\left| {{Z_u}} \right|}} $.
\noqed
\end{proof}
\begin{proof}[(ii)]
The argument proceeds as above noting that
\begin{align*}
\left\{ {\mu ({2^u},x{2^{u - 1}}):x \in {\mathbb{Z}_m}} \right\} 
	&= \left\{ {\lambda ({a^x}b) \circ \lambda {{(b)}^{u - 1}}:x \in {\mathbb{Z}_m}} \right\} \\
	&= \left\{ {{{( - 1)}^u}\left( {\rho ({a^x}b) \circ \rho {{(b)}^{u - 1}}} \right):x \in {\mathbb{Z}_m}} \right\} .
\end{align*}
Corollary \ref{c18} applies here, as it did in part \emph{(i)}, and the result follows in the same manner.
\end{proof} \noqed
\end{proof}

\begin{lemma}
\label{l20}
$ \left| {\mathcal{C}(0,1)} \right| = \dfrac{m}{{\left| {{Z_1}} \right|}} $.
\end{lemma}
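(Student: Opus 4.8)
The plan is to identify $\mathcal{C}(0,1)$ with the set of $\rho$-maps $\{\rho(a^r):r\in\mathbb{Z}_m\}$ and then count the distinct maps among them via the quotient group $\langle a\rangle/Z_1$, exactly in parallel with the proof of Theorem \ref{t19}. First I would recast the container in terms of generators: directly from the definition $\mathcal{C}(0,1)=\{\mu(0,x):x\in\mathbb{Z}_m\}$, and by Lemma \ref{l2}(i) each map $\mu(0,x)=\mu(\beta_0,x\alpha_0)=\rho(a^x)$ (this is also the content of the identity $\mathcal{C}(0,1)=\mathcal{C}(\beta_0,\alpha_0)$ established in the proof of Lemma \ref{l7}). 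Hence $\mathcal{C}(0,1)=\{\rho(a^r):r\in\mathbb{Z}_m\}$, and computing $|\mathcal{C}(0,1)|$ reduces to counting the number of distinct maps $\rho(a^r)$.

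The key step is to prove the equivalence: for $r,r'\in\mathbb{Z}_m$, one has $\rho(a^r)=\rho(a^{r'})$ if and only if $a^{r'-r}\in Z_1(D_m)$. Since $(x)\rho(a^r)=[x,a^r]=[a^r,x]^{-1}$ by CI (ii), the equality of maps $\rho(a^r)=\rho(a^{r'})$ is equivalent to $[a^r,x]=[a^{r'},x]$ for every $x\in D_m$. Applying Theorem \ref{t16} with $u=1$, $g_1=a^r$, and $g_2=a^{r'}$, this condition holds precisely when $(a^r)^{-1}a^{r'}=a^{r'-r}\in Z_1(D_m)$, which is the desired equivalence.

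Finally I would turn this into a cardinality count. By Theorem \ref{t14}, $Z_1(D_m)=\{1\}$ when $m$ is odd and $Z_1(D_m)=\{1,a^{m/2}\}$ when $m$ is even, so in every case $Z_1(D_m)\leq\langle a\rangle$ and the quotient $\langle a\rangle/Z_1$ is defined. The equivalence just proved says exactly that $\rho(a^r)=\rho(a^{r'})$ if and only if $a^rZ_1=a^{r'}Z_1$ in $\langle a\rangle/Z_1$. Therefore the number of distinct maps $\rho(a^r)$ equals $\bigl|\langle a\rangle/Z_1\bigr|=m/|Z_1|$, which gives $|\mathcal{C}(0,1)|=m/|Z_1|$ as claimed.

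The only point requiring care is the invocation of Theorem \ref{t16}: that theorem is phrased with the distinguished group element in the first commutator slot, as $[g_1,x_1,\ldots]=[g_2,x_1,\ldots]$, whereas our maps produce $[x,a^r]$ with the group element in the second slot. Using CI (ii) to rewrite $(x)\rho(a^r)=[a^r,x]^{-1}$ before applying the theorem resolves this cleanly. Everything else is routine bookkeeping, and the argument deliberately mirrors the $\langle a\rangle/Z_u$ counting already carried out in Theorem \ref{t19}, so that Lemma \ref{l20} slots in as the $u=1$ companion to that result.
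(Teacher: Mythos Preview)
Your proof is correct, but it takes a genuinely different route from the paper's own argument. The paper proves Lemma~\ref{l20} by a bare-hands congruence computation: it asks directly when $\mu(0,x)=\mu(0,y)$ by evaluating both sides on $a^ib^j$, reduces to the condition $2x\equiv 2y\pmod m$, and then splits into the cases $m$ odd (where $2$ is invertible, giving $m$ distinct maps) and $m$ even (where Lemma~\ref{l10} shows the maps pair up, giving $m/2$); the connection to $|Z_1|$ is made only at the end by reading off $|Z_1|$ from Theorem~\ref{t14}.

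Your argument instead recognises $\mathcal{C}(0,1)=\{\rho(a^r):r\in\mathbb{Z}_m\}$ via Lemma~\ref{l2} and invokes Theorem~\ref{t16} with $u=1$ to get $\rho(a^r)=\rho(a^{r'})\iff a^{r'-r}\in Z_1$, then counts via the quotient $\langle a\rangle/Z_1$. This is cleaner and more conceptual: it avoids the odd/even case split entirely and makes Lemma~\ref{l20} the exact $u=1$ analogue of Theorem~\ref{t19}, so the appearance of $|Z_1|$ is explained rather than verified after the fact. The paper's approach, on the other hand, is more elementary and independent of the metabelian machinery in Theorem~\ref{t16}. Your handling of the slot-swap via CI~(ii) is correct and necessary, and your check that $Z_1\leq\langle a\rangle$ (so the quotient makes sense) via Theorem~\ref{t14} is the right thing to do.
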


\begin{proof}
Let $ x,y \in {\mathbb{Z}_m} $ and let $ \mu (0,x),\mu (0,y) $ be arbitrary elements of $ \mathcal{C}(0,1) $. If $ \mu (0,x) = \mu (0,y) $, then for every $ i \in {\mathbb{Z}_m},j \in {\mathbb{Z}_2} $, we have:
\[
({a^i}{b^j})\mu (0,x) = ({a^i}{b^j})\mu (0,y) .
\]
This is true if and only if, for each $j$, $ {a^{ - x{\beta _j}}} = {a^{ - y{\beta _j}}} $.And this is true if and only if $ x{\beta _j} \equiv y{\beta _j}\;(\bmod m) $, or, alternately, $ \left( {x - y} \right){\beta _j} \equiv 0\;(\bmod m) $.
If $ j = 0 $ then $ {\beta _0} = 0 $, so we need only consider the condition when $ j = 1 $ and, thus, $ {\beta _1} =  - 2 $. Thus we have $ \mu (0,x) = \mu (0,y) $ if and only if $- 2(x - y) \equiv 0\;(\bmod m) $. Since $- 1 $ is invertible in $ {\mathbb{Z}_m} $, this is equivalent to 
\begin{equation}
\label{e411}
2x \equiv 2y\;(\bmod m) .
\end{equation}
If $m$ is odd, $ 2 $ is invertible in $ {\mathbb{Z}_m} $, thus (\ref{e411}) is equivalent to $ x \equiv y\;(\bmod m) $ and, therefore, each mapping $ \mu (0,x) $ in $ \mathcal{C}(0,1) $ is distinct and $ \left| {\mathcal{C}(0,1)} \right| = m $. In this case the statement of the lemma is true since, by Theorem \ref{t14}, $ \left| {{Z_1}} \right| = 1 $. If $m$ is even, let us write $ m = {2^\ell }n $ with $ \ell  > 0 $ and $n$ odd. Condition (\ref{e411}) is then $ 2x \equiv 2y\;(\bmod {2^\ell }t) $. This is equivalent to $ x \equiv y\;(\bmod {2^{\ell  - 1}}t) $ by Lemma \ref{l10}. Therefore the elements of $ \mathcal{C}(0,1) $ are equal in pairs; $ \mu (0,x) = \mu (0,x + {2^{\ell  - 1}}n) $ for $ x = 0,1, \ldots ,{2^{\ell  - 1}}n - 1 $. Thus $ \left| {\mathcal{C}(0,1)} \right| = \dfrac{m}{2} $. By Theorem \ref{t14} we see that $ \left| {{Z_1}} \right| = 2 $ and the result is verified.
\end{proof}

\section{The Main Theorem}
We are now prepared to give formulas for the exact orders of $ {\rm P}({D_m}) $ and $ \Lambda ({D_m}) $.

\begin{theorem}
\label{t21}
If $ m = {2^\ell }n > 3 $ with $ n $ odd,
\begin{enumerate}[(i)]
	\item $  
	\left| {{\rm P}({D_m})} \right| = m\left( {\dfrac{1}{{\left| {{Z_1}} \right|}} + \sum\limits_{i = 1}^{t - 1} {\dfrac{1}{{\left| {{Z_i}} \right|}}} } \right)\mbox{, where }    
	t = \left\{ {
	\begin{array}{*{20}{c}}
 		{1 + or{d_m}( - 2){\text{ for }}\ell  = 0,\,n > 1} \\ 
  		{\ell  + pe{r_m}( - 2){\text{ for }}\ell  > 0,\,n > 1} \\ 
  		{\ell {\text{                   for }}\ell  > 0,\,n = 1} 
	\end{array}} \right.  ,$
	\item $ 
	 \left| {\Lambda ({D_m})} \right| = m\left( {\dfrac{1}{{\left| {{Z_1}} \right|}} + \sum\limits_{i = 1}^{t' - 1} {\dfrac{1}{{\left| {{Z_i}} \right|}}} } \right)\mbox{, where }  
	t' = \left\{ {
	 \begin{array}{*{20}{c}}
  		{1 + or{d_m}(2){\text{ for }}\ell  = 0,\,n > 1} \\ 
  		{\ell  + pe{r_m}(2){\text{ for }}\ell  > 0,\,n > 1} \\ 
 		{\ell {\text{                   for }}\ell  > 0,\, n= 1} 
	\end{array}} \right. , $
\end{enumerate}
\end{theorem}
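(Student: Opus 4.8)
The plan is to combine the disjoint-union decomposition of Theorem~\ref{t12} with the container cardinalities from Lemma~\ref{l20} and Theorem~\ref{t19}. Since Theorem~\ref{t12}(i) expresses $\mathrm{P}(D_m)$ as a \emph{disjoint} union
\[
\mathrm{P}(D_m) = \mathcal{C}(0,1) \cup \left(\bigcup_{i=1}^{\tau} \mathcal{C}((-2)^i,(-2)^{i-1})\right),
\]
where $\tau$ is the upper index occurring in Theorem~\ref{t12}(i), the order of $\mathrm{P}(D_m)$ is simply the sum of the cardinalities of the individual containers. First I would write
\[
|\mathrm{P}(D_m)| = |\mathcal{C}(0,1)| + \sum_{i=1}^{\tau} |\mathcal{C}((-2)^i,(-2)^{i-1})|,
\]
substitute $|\mathcal{C}(0,1)| = m/|Z_1|$ from Lemma~\ref{l20} and $|\mathcal{C}((-2)^i,(-2)^{i-1})| = m/|Z_i|$ from Theorem~\ref{t19}(i), factor out $m$, and obtain the stated formula, reindexing via $t = \tau + 1$ so that the summation runs to $t-1 = \tau$.

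The one hypothesis that must be discharged before Theorem~\ref{t19} can be applied is that $Z_i(D_m) \leq \langle a \rangle$ for every index $i$ appearing in the union. Here I would consult the explicit description in Theorem~\ref{t14} and examine the three cases separately. For $m$ odd ($\ell = 0$) every $Z_i$ is trivial and the containment is automatic; for $m$ even with $n > 1$, Theorem~\ref{t14}(b)(i) shows that each $Z_i$ is a subgroup of $\langle a \rangle$ no matter how large $i$ is, so again nothing needs checking. The delicate case is the $2$-group $m = 2^\ell$, where Theorem~\ref{t14}(b)(ii) gives $Z_u = D_m \not\leq \langle a \rangle$ as soon as $u \geq \ell$. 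In that case, however, Theorem~\ref{t12}(i) takes $\tau = \ell - 1$, so the largest index occurring in the union is $\ell - 1 < \ell$, and Theorem~\ref{t14}(b)(ii) guarantees $Z_i \leq \langle a \rangle$ for all such $i$. Thus the hypothesis of Theorem~\ref{t19} is satisfied in every case, and this verification is the main point of the argument.

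The principal obstacle is therefore bookkeeping rather than computation: confirming that the range of summation forced by Theorem~\ref{t12} never strays outside the regime in which the container cardinality formula of Theorem~\ref{t19} is valid. Once that is settled, part (ii) follows by the identical argument with $-2$ replaced by $2$ throughout, using $pe{r_m}(2)$ and $or{d_m}(2)$ in place of $pe{r_m}(-2)$ and $or{d_m}(-2)$, invoking Theorem~\ref{t19}(ii) for the cardinalities $|\mathcal{C}(2^i,2^{i-1})| = m/|Z_i|$, and using the left-hand decomposition of Theorem~\ref{t12}(ii). I would finish by matching the reindexed upper limits against the piecewise definitions of $t$ and $t'$ in the statement.
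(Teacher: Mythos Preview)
Your proposal is correct and follows essentially the same route as the paper's own proof: express $\mathrm{P}(D_m)$ as the disjoint union of containers via Theorem~\ref{t12}, sum the cardinalities using Lemma~\ref{l20} and Theorem~\ref{t19}, and verify that the hypothesis $Z_i \leq \langle a \rangle$ of Theorem~\ref{t19} holds throughout the summation range by noting (via Theorem~\ref{t14}) that it can only fail when $n=1$ and $i \geq \ell$, a situation excluded because the upper index is then $\ell - 1$. Your reindexing $t = \tau + 1$ to reconcile the upper limits in Theorems~\ref{t12} and~\ref{t21} is exactly what is needed, and the treatment of part~(ii) by the parallel argument with $2$ in place of $-2$ matches the paper as well.
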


\begin{proof}[Proof of (i)]
By Theorem \ref{t12}\emph{(i)} we have expressed $ {\rm P}({D_m}) $ as the disjoint union of containers
\begin{equation}
\label{e51}
\mathcal{C}(0,1) \cup \left( {\bigcup\limits_{i = 1}^t {\mathcal{C}({{( - 2)}^i},{{( - 2)}^{i - 1}})} } \right) ,
\end{equation}
where $ t = or{d_m}( - 2) $ for $ m $ odd, $ t = \ell  + pe{r_m}( - 2) - 1 $ for $ n > 1 $ and $ t = \ell  - 1 $ for $ n = 1 $. To find $ \left| {{\rm P}({D_m})} \right| $ we shall simply add the cardinalities of the containers in (\ref{e51}).  These cardinalities are given in Theorem \ref{t19}\emph{(i)} and Lemma \ref{l20} ; however, the hypothesis, $ {Z_u}({D_m}) \leq \left\langle a \right\rangle $, in Theorem \ref{t19} is not met in every case. Theorem \ref{t14}\emph{(ii)}, shows that the only case in which this hypothesis is not met is when $ n = 1 $ and $ u \geq \ell $. Thus consider the case in which $ m = {2^\ell } $ and $ u \geq \ell $. Here we see that we are taking the union as $i$ goes from $ 1 $ to $t$, but here $ t = \ell  - 1 < \ell  \leq u $. Thus we will not need to apply Theorem \ref{t19} in such a case and the hypothesis is irrelevant.
 
To complete the proof we apply Theorem \ref{t19}\emph{(i)} and Lemma \ref{l20} to the disjoint union (\ref{e51}) to obtain,
\begin{align*}
\left| {{\rm P}({D_m})} \right| &= \left| {\mathcal{C}(0,1)} \right| + \left( {\sum\limits_{i = 1}^t {\left| {\mathcal{C}({{( - 2)}^i},{{( - 2)}^{i - 1}})} \right|} } \right)\\ &= \dfrac{m}{{\left| {{Z_1}} \right|}} + \left( {\sum\limits_{i = 1}^t {\dfrac{m}{{\left| {{Z_i}} \right|}}} } \right) = m\left( {\dfrac{1}{{\left| {{Z_1}} \right|}} + \left( {\sum\limits_{i = 1}^t {\dfrac{1}{{\left| {{Z_i}} \right|}}} } \right)} \right).
\end{align*} 
The proof of \emph{(ii)} is quite similar for $ \Lambda ({D_m}) $. 
\end{proof}

At the end of the paper we have given a table displaying the orders of the commutation semigroups of dihedral groups $ {D_m} $ for $ 3 \leq m \leq 101 $. Looking at this table, many conjectures present themselves. For example, if $p$ is an odd prime less than 50, the table shows that $ \left| {{\rm P}({D_p})} \right| = \left| {{\rm P}({D_{2p}})} \right| $ and $ \left| {\Lambda ({D_p})} \right| = \left| {\Lambda ({D_{2p}})} \right| $. We will prove this true for all odd primes $p$. The proof will serve as an example of  how our approach can be applied to the study of such questions.

\begin{theorem}
\label{t22}
If $p$ is an odd prime, then 
\begin{enumerate}[(i)]
	\item $ \left| {{\rm P}({D_p})} \right| = \left| {{\rm P}({D_{2p}})} \right|, $
	\item $ \left| {\Lambda ({D_p})} \right| = \left| {\Lambda ({D_{2p}})} \right| $.
\end{enumerate}
\end{theorem}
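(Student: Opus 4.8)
The plan is to evaluate the formulas of Theorem \ref{t21} separately at $m = p$ and at $m = 2p$, simplify each using the structure of the upper central series from Theorem \ref{t14}, and then reduce the desired equality to a single number-theoretic identity relating a period modulo $2p$ to an order modulo $p$.

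First I would treat $D_p$. Since $p$ is odd we are in the case $\ell = 0$, $n = p > 1$, so Theorem \ref{t21}(i) gives $t = 1 + or{d_p}(-2)$. By Theorem \ref{t14}(a) every centre $Z_i(D_p)$ is trivial, so $|Z_i| = 1$ for all $i$, and the formula collapses to a sum of ones:
\[
|{\rm P}(D_p)| = p\Bigl(1 + \sum_{i=1}^{t-1} 1\Bigr) = pt = p\bigl(1 + or{d_p}(-2)\bigr).
\]
Next I would treat $D_{2p}$, where $\ell = 1$, $n = p > 1$, so $t = 1 + pe{r_{2p}}(-2)$. By Theorem \ref{t14}(b)(i), for every $i \ge \ell = 1$ we have $Z_i(D_{2p}) = \{1, a^p\}$, hence $|Z_i| = 2$ for all $i \ge 1$. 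Substituting into Theorem \ref{t21}(i),
\[
|{\rm P}(D_{2p})| = 2p\Bigl(\tfrac12 + \sum_{i=1}^{t-1}\tfrac12\Bigr) = 2p\cdot\tfrac{t}{2} = pt = p\bigl(1 + pe{r_{2p}}(-2)\bigr).
\]

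Comparing the two expressions, part (i) reduces to the single claim $pe{r_{2p}}(-2) = or{d_p}(-2)$, which I would establish directly. By Lemma \ref{l11}(ii) the index of $-2$ modulo $2p$ equals $\ell = 1$, so its period is the least $k > 0$ with $(-2)^{1+k} \equiv (-2)^1 \pmod{2p}$, that is, $-2\bigl((-2)^k - 1\bigr) \equiv 0 \pmod{2p}$. Applying Lemma \ref{l10} to cancel the common factor $2$, this is equivalent to $(-2)^k \equiv 1 \pmod p$, whose least positive solution is exactly $or{d_p}(-2)$. This proves (i). Part (ii) is identical with $2$ in place of $-2$ throughout, using the $t'$ branch of Theorem \ref{t21} and the corresponding instances of Lemmas \ref{l10} and \ref{l11}.

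The main obstacle — indeed the only nontrivial point — is the reduction $pe{r_{2p}}(-2) = or{d_p}(-2)$: one must take care that, because $-2$ is not invertible modulo $2p$, the period is governed by the congruence obtained only after dividing out the non-invertible factor $2$, which is precisely where Lemma \ref{l10} is required. Everything else is bookkeeping with the centre orders supplied by Theorem \ref{t14}, and the cancellation of the factor $2$ against the $|Z_i| = 2$ in the $D_{2p}$ count is what makes the two cardinalities coincide.
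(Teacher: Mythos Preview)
Your proposal is correct and follows essentially the same route as the paper: evaluate Theorem~\ref{t21} at $m=p$ and $m=2p$, use Theorem~\ref{t14} to compute the centre orders, and reduce to the identity $pe{r_{2p}}(-2)=or{d_p}(-2)$, which you then prove via Lemma~\ref{l10}. The only cosmetic difference is that you invoke Lemma~\ref{l11}(ii) to pin down the index as $1$ before characterizing the period, whereas the paper rederives this fact inline; your version is, if anything, slightly tidier.
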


\begin{proof}
The condition that $ \left| {{\rm P}({D_p})} \right| = \left| {{\rm P}({D_{2p}})} \right| $ can be written in terms of our formulas. First for $p$,
\[
\left| {{\rm P}({D_p})} \right| = p\left( {\dfrac{1}{{\left| {{Z_1}({D_p})} \right|}} + \sum\limits_{i = 1}^{t - 1} {\dfrac{1}{{\left| {{Z_i}({D_p})} \right|}}} } \right),
\] 
where $ t = 1 + or{d_p}( - 2) $. For $ 2p $ we have, 
\[
\left| {{\rm P}({D_{2p}})} \right| = 2p\left( {\dfrac{1}{{\left| {{Z_1}({D_{2p}})} \right|}} + \sum\limits_{i = 1}^{t' - 1} {\dfrac{1}{{\left| {{Z_i}({D_{2p}})} \right|}}} } \right), 
\] 
where $ t' = 1 + pe{r_{2p}}( - 2) $. By Theorem \ref{t14}, Each $ {Z_i}({D_p}) = \left\{ 1 \right\} $, thus we have
\[
\left| {{\rm P}({D_p})} \right| = p \cdot t = p \cdot (1 + or{d_p}( - 2)) .
\]
Looking at $ {D_{2p}} $, we see that since $ \ell  = 1 $ in this case, thus we are interested in $ {Z_u}({D_{2p}}) $ for $ u \geq \ell $. Theorem \ref{t14}\emph{(b)(i)} then shows that $ \left| {{Z_u}({D_{2p}})} \right| $ = $ {2^\ell }( = 2) $ for $ u \geq 1 $. Thus we have
 \[ \left| {{\rm P}({D_{2p}})} \right| = 2p\left( {\dfrac{1}{2} + \sum\limits_{i = 1}^{t' - 1} {\dfrac{1}{2}} } \right) = p \cdot t' = p \cdot (1 + pe{r_{2p}}( - 2)) .\]

To prove part \emph{(i)} of the theorem, it suffices to show that $ or{d_p}( - 2) = pe{r_{2p}}( - 2) $. By similar arguments, the same equation will imply that part \emph{(ii)} is true. 

Since $- 2 $ is coprime to $p$ it is invertible in $ {\mathbb{Z}_p} $, it has an order. Letting $ or{d_p}( - 2) = k $, we have $ {( - 2)^k} \equiv 1\;(\bmod p) $. Note that $ in{d_p}( - 2) = 1 $. By Lemma \ref{l10}, $ 2{( - 2)^k} \equiv 2\;(\bmod 2p) $, and multiplying both sides by $- 1 $ gives us, $ {( - 2)^{1 + k}} \equiv {( - 2)^1}\;(\bmod 2p) $. This shows that $ in{d_{2p}}( - 2) = 1 $. We see that the period of $- 2 $ is at most $k$, so suppose $ pe{r_{2p}}( - 2) = t < k $. Then $ {( - 2)^{1 + t}} \equiv {( - 2)^1}\;(\bmod 2p) $. Rewrite this as
 $- 2{( - 2)^t} \equiv  - 2\;(\bmod 2p) $ and multiply both sides by $- 1 $ to obtain, $ 2{( - 2)^t} \equiv 2\;(\bmod 2p) $. Applying Lemma \ref{l10} gives us $ {( - 2)^t} \equiv 1\;(\bmod p) $, contradicting  the minimality of $k$,  the order of $- 2 $ in $ {\mathbb{Z}_p} $. Therefore $ pe{r_{2p}}( - 2) = k = or{d_p}( - 2) $ and the result follows.
\end{proof}

In the Introduction we quoted Theorem C2, proved by Countryman in \cite{countryman70}. We can reprove this using our methods, but, more interestingly, we can prove a new result parallel to his. Our proof of Theorem C2 is similar to the proof below.

\begin{theorem}
\label{t23}
If $p$ and $q$ are primes, then $ \Lambda ({D_p}) \cong \Lambda ({D_q}) $ implies $ p = q $.
\end{theorem}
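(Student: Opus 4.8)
The plan is to extract from the abstract semigroup $\Lambda(D_p)$ a numerical invariant that recovers the prime $p$; the cleanest such invariant is the number of idempotents. First I would dispose of the degenerate case: if either prime equals $2$, then $D_2$ is abelian, so its left commutation semigroup is the trivial one-element semigroup, while $|\Lambda(D_q)| \geq 6 > 1$ for every odd prime $q$ (by Theorem \ref{t21}); hence an isomorphism forces $p = q = 2$. So I may assume $p$ and $q$ are both odd.

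For an odd prime $p$ we have $\ell = 0$ and $n = p > 1$, so Theorem \ref{t12}(ii) exhibits $\Lambda(D_p)$ concretely as the disjoint union $\mathcal{C}(0,1) \cup \bigcup_{i=1}^{t'}\mathcal{C}(2^i,2^{i-1})$ with $t' = \mathrm{ord}_p(2)$; thus every element is a $\mu$-map $\mu(A,B)$ whose first coordinate $A$ lies in $\{0\}\cup\langle 2\rangle$, where $\langle 2\rangle \le \mathbb{Z}_p^{*}$. The first thing I would record is that, because $p$ is odd, the correspondence $(A,B)\mapsto \mu(A,B)$ is injective on this index set: evaluating $\mu(A_1,B_1)=\mu(A_2,B_2)$ at $a^i$ (the case $j=0$, where $\alpha_0=1,\ \beta_0=0$) forces $A_1=A_2$, and then evaluating at $b$ (the case $j=1$, where the coefficient $\beta_1=-2$ is invertible mod $p$) forces $B_1=B_2$. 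Hence counting maps is the same as counting admissible pairs.

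Next I would compute the idempotents directly from Lemma \ref{l3}. Since $\mu(A,B)\circ\mu(A,B)=\mu(A^2,BA)$, idempotency is equivalent, by the injectivity just noted, to the two equations $A^2=A$ and $BA=B$ in the field $\mathbb{Z}_p$. The first forces $A\in\{0,1\}$; when $A=0$ the second gives $B=0$, and when $A=1$ the second holds for every $B$. Both values of $A$ are admissible, since $0$ indexes $\mathcal{C}(0,1)$ and $1=2^{t'}\in\langle 2\rangle$ indexes the container $\mathcal{C}(2^{t'},2^{t'-1})=\mathcal{C}(1,1)$ (the last equality by Lemma \ref{l4}, as $2^{t'-1}$ is a unit mod $p$). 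Consequently the idempotents of $\Lambda(D_p)$ are exactly $\mu(0,0)$ together with the $p$ maps $\mu(1,B)$, $B\in\mathbb{Z}_p$, giving $p+1$ in total. The number of idempotents is preserved by any semigroup isomorphism, so $\Lambda(D_p)\cong\Lambda(D_q)$ yields $p+1=q+1$, whence $p=q$.

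The two displayed congruence computations are routine; the step requiring the most care is verifying that every candidate idempotent genuinely lies in $\Lambda(D_p)$ — in particular that $1\in\langle 2\rangle$, so that the entire fiber $\{\mu(1,B):B\in\mathbb{Z}_p\}$ is present — together with the faithfulness of the parametrization, without which the idempotent count could be corrupted. I would also remark that merely equating cardinalities, $p\bigl(1+\mathrm{ord}_p(2)\bigr) = q\bigl(1+\mathrm{ord}_q(2)\bigr)$, is not evidently sufficient to force $p=q$, which is exactly why the finer idempotent invariant is the right tool.
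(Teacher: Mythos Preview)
Your proof is correct and takes a genuinely different route from the paper. The paper argues purely from cardinality: writing $|\Lambda(D_p)| = p(1+\mathrm{ord}_p(2))$ and $|\Lambda(D_q)| = q(1+\mathrm{ord}_q(2))$ via Theorem~\ref{t21}, it assumes $p<q$ (both odd), observes that then $q \mid 1+\mathrm{ord}_p(2)$, and bounds $1+\mathrm{ord}_p(2)\le 1+(p-1)=p<q$, a contradiction. You instead count idempotents, showing $\Lambda(D_p)$ has exactly $p+1$ of them; this is a more structural invariant and the computation is clean. One correction to your closing remark: equating cardinalities \emph{is} sufficient, as the short divisibility argument above shows, and the paper explicitly notes this strengthening after its proof. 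So your idempotent approach is not strictly needed here, though it has the virtue of recovering $p$ directly from the semigroup without any number-theoretic estimate, and may be the better tool in settings where the order formula is less cooperative.
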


\begin{proof}
We will suppose that $ p < q $ and derive a contradiction. Note that if $ p = 2 $, $ {D_2} $ is abelian and, as mentioned earlier, $ \left| {\Lambda ({D_2})} \right| = 1 $. With $ q $ an odd prime, $ {D_q} $ is nonabelian, thus $ \left| {\Lambda ({D_q})} \right| > 1 $. The hypothesis is not met in such a case and, thus, we may assume that both $p$ and $q$ are odd primes. 

From our assumption, $ \Lambda ({D_p}) \cong \Lambda ({D_q}) $, it follows that $ \left| {\Lambda ({D_p})} \right| = \left| {\Lambda ({D_q})} \right| $. Since $p$ and $q$ are both odd, Theorem \ref{t14} implies that $ {Z_i} = \left\{ 1 \right\} $ in both cases. Thus Theorem \ref{t21} gives,
\begin{equation}
\label{e52}
p\left( {\dfrac{1}{{\left| {{Z_1}} \right|}} + \sum\limits_{i = 1}^{t - 1} {\dfrac{1}{{\left| {{Z_i}} \right|}}} } \right) = \left| {\Lambda ({D_p})} \right| = \left| {\Lambda ({D_q})} \right| = q\left( {\dfrac{1}{{\left| {{Z_1}} \right|}} + \sum\limits_{i = 1}^{t' - 1} {\dfrac{1}{{\left| {{Z_i}} \right|}}} } \right) ,
\end{equation}
where $ t = 1 + or{d_p}(2) $ and $ t' = 1 + or{d_q}(2) $. From (\ref{e52}) we deduce that $ p\left( t \right) = q\left( {t'} \right) $ and, hence, that
 $ p\left( {1 + or{d_p}(2)} \right) = q\left( {1 + or{d_q}(2)} \right) $. Since we are assuming that $p$ and $q$ are distinct primes we conclude that $ \left. q \right|1 + or{d_p}(2) $ and, therefore, $ q \leq 1 + or{d_p}(2) $. But $ or{d_p}(2) < p $, since $ \left. {or{d_p}(2)} \right|\phi (p) $, and $ \phi (p) = p - 1 $. Thus we have $ q \leq 1 + or{d_p}(2) < p $, which contradicts our assumption that $ p < q $.
\end{proof}

Note that the hypothesis, $ \Lambda ({D_p}) \cong \Lambda ({D_q}) $, can be ``weakened'' to $ \left| {\Lambda ({D_p})} \right| = \left| {\Lambda ({D_q})} \right| $. 

\section{Formulas and counterexamples}
First we will derive the formulas of Levy and DeWolf  quoted in the Introduction. Note that Theorem D covers all even values of $m$ including the nilpotent case ( $ m = {2^\ell } $ ) proved earlier by Levy in Theorem L2; thus, for brevity, we will derive only Theorems L1 and D. Translating from the notation of these theorems into the notation of this paper we have, $ in{d_R}(m) = pe{r_m}( - 2) $, $ in{d_L}(m) = pe{r_m}(2) $. Also, for $ n > 1 $, $ {m_R} = \ell  + pe{r_m}( - 2) $ and $ {m_L} = \ell  + pe{r_m}(2) $, and for $ n = 1 $, $ {m_R} = {m_L} = \ell . $ 
\begin{proof}[Derivation of Theorem L1]
First we will establish part \emph{(i)}. From Theorem \ref{t21} and Theorem \ref{t14}, for $m$ odd, we have $ \left| {{\rm P}({D_m})} \right| = m\left( {\dfrac{1}{{\left| {{Z_1}} \right|}} + \sum\limits_{i = 1}^{t - 1} {\dfrac{1}{{\left| {{Z_i}} \right|}}} } \right) $ with $ t = 1 + or{d_m}( - 2) $ and each $ {Z_i}({D_m}) = \left\{ 1 \right\} $. Thus 
  \[\left| {{\rm P}({D_m})} \right| = m\left( {1 + (t - 1)} \right) = mt = m(1 + or{d_m}( - 2)) = m(pe{r_m}( - 2) + 1) .\]
The proof for part \emph{(ii)} is similar.
\end{proof}
\begin{proof}[Derivation of Theorem D]
We first prove part \emph{(i)}. Letting $ m = {2^\ell }s $, with $ s{\text{ odd}} $, and $ \ell ,s \in {\mathbb{Z}^ + } $, we will break into two cases: $ s > 1 $ and $ s = 1 $. Supposing that $ s > 1 $, Theorem \ref{t14} implies that $ \left| {{Z_i}} \right| = {2^i} $ for $ i < \ell $ and $ \left| {{Z_i}} \right| = {2^\ell } $ for $ i \geq \ell $. From Theorem \ref{t21} we see that
\begin{equation}
\label{e62}
\left| {{\rm P}({D_m})} \right| = m\left( {\dfrac{1}{{\left| {{Z_1}} \right|}} + \sum\limits_{i = 1}^{t - 1} {\dfrac{1}{{\left| {{Z_i}} \right|}}} } \right),
\end{equation}
with $ t = \ell  + pe{r_m}( - 2) $. Note that
\begin{align}
\label{e63}
\sum\limits_{i = 1}^{t - 1} {\dfrac{1}{{\left| {{Z_i}} \right|}}}  &= \sum\limits_{i = 1}^{\ell  - 1} {\dfrac{1}{{{2^i}}}}  + \sum\limits_{i = \ell }^{t - 1} {\dfrac{1}{{{2^\ell }}}}  = \dfrac{{{2^{\ell  - 1}} - 1}}{{{2^{\ell  - 1}}}} + \dfrac{{t - \ell }}{{{2^\ell }}} \notag\\
&= \dfrac{{{2^\ell } - 2 + t - \ell }}{{{2^\ell }}} = \dfrac{{{2^\ell } - 2 + pe{r_m}( - 2)}}{{{2^\ell }}}.
\end{align}
Substituting the outcome of (\ref{e63}) into (\ref{e62}) we arrive at,
\begin{align*}
  \left| {{\rm P}({D_m})} \right| &= m\left( {\dfrac{1}{{\left| {{Z_1}} \right|}} + \sum\limits_{i = 1}^{t - 1} {\dfrac{1}{{\left| {{Z_i}} \right|}}} } \right) = m\left( {\dfrac{1}{2} + \dfrac{{{2^\ell } - 2 + pe{r_m}( - 2)}}{{{2^\ell }}}} \right) \\
			  	 &= {2^\ell }s\left( {\dfrac{{{2^\ell } + {2^{\ell  - 1}} - 2 + pe{r_m}( - 2)}}{{{2^\ell }}}} \right) = s\left( {{2^\ell } + {2^{\ell  - 1}} + pe{r_m}( - 2) - 2} \right) .
\end{align*}
The proof of part \emph{(ii)} is similar.
\end{proof}
Having shifted among notations to come up with these derivations, we would like to give here the simplest numerical formulas we know for $ {\rm P}({D_m})$ and $\Lambda ({D_m}) $.
\begin{enumerate}[(i)]
	\item For $ m $ odd, \[ \left| {{\rm P}({D_m})} \right| = m(or{d_m}( - 2) + 1) \mbox{ and } \left| {\Lambda ({D_m})} \right| = m(or{d_m}(2) + 1), \]
	\item For $ m = {2^\ell }n $ with $ n $ odd and $ n > 1 ,$ 
		 \[ \left| {{\rm P}({D_m})} \right| = s({2^\ell } + {2^{\ell  - 1}} - 2 + pe{r_m}( - 2))  \mbox{ and } \left| {\Lambda ({D_m})} \right| = s({2^\ell } + {2^{\ell  - 1}} - 2 + pe{r_m}(2)) .\]
	\item For $ m = {2^\ell } $, $ \left| {{\rm P}({D_m})} \right| = \left| {\Lambda ({D_m})} \right| = {2^\ell } + {2^{\ell  - 1}} - 2 $.
\end{enumerate}
Lastly, we will comment on the three theorems of Countryman on $pq$ groups quoted in the Introduction.

We show here that $ \left| {{\rm P}({D_8})} \right| = \left| {\Lambda ({D_8})} \right| $ and $ {\rm P}({D_8}) \cong \Lambda ({D_8}) $, but $ {\rm P}({D_8})\not  = \Lambda ({D_8}) $, thereby giving counterexamples to the statements $ (c) \Rightarrow (a){\text{ and }}(b) \Rightarrow (a) $ in Theorem C1 for dihedral groups in general. Applying Theorem \ref{t12} to the dihedral group of order 16,we see that
\[
{\rm P}({D_8}) = \mathcal{C}(0,1)\,\dot  \cup\, \mathcal{C}(6,1)\,\dot  \cup\, \mathcal{C}(4,2) \mbox{ while } \Lambda ({D_8}) = \mathcal{C}(0,1)\,\dot  \cup \,\mathcal{C}(2,1)\,\dot  \cup\, \mathcal{C}(4,2).
\]
By Lemma \ref{l20}, $ \left| {\mathcal{C}(0,1)} \right| = \dfrac{8}{{\left| {{Z_1}} \right|}} = \dfrac{8}{2} = 4 $, $ \left| {\mathcal{C}(6,1)} \right| = \dfrac{8}{{\left| {{Z_1}} \right|}} = \dfrac{8}{2} = 4 $, $ \left| {\mathcal{C}(2,1)} \right| = \dfrac{8}{{\left| {{Z_1}} \right|}} = \dfrac{8}{2} = 4 $, and
 $ \left| {\mathcal{C}(4,2)} \right| = \dfrac{8}{{\left| {{Z_2}} \right|}} = \dfrac{8}{4} = 2 $.Therefore $ \left| {{\rm P}({D_8})} \right| = 10 = \left| {\Lambda ({D_8})} \right| $, however, by Lemma 6, $ \mathcal{C}(6,1) \cap \mathcal{C}(2,1) = \varnothing $ and, therefore, $ {\rm P}({D_8})\not  = \Lambda ({D_8}) $. To show that $ {\rm P}({D_8}) \cong \Lambda ({D_8}) $, we define a mapping $ \phi :{\rm P}({D_8}) \to \Lambda ({D_8}) $ 
by $ \phi \left( {\mu (x,y)} \right) = \mu (3x,y) $. This can be shown to be an isomorphism. Note also that the container decomposition above shows that $ {\rm P}({D_8})\not  \subseteq \Lambda ({D_8}) $ and $ {\rm P}({D_8})\not  \subseteq \Lambda ({D_8}) $, thus providing a counterexample to the generalization of Theorem C3 to dihedral groups in general. 

We will show that $ (c) \Rightarrow (b) $ fails for $ {D_{15}} $. Note by our Theorem 21 it follows that $ \left| {{\rm P}({D_{15}})} \right| = 75 = \left| {\Lambda ({D_{15}})} \right| $. To see that $ {\rm P}({D_{15}})\not  \cong \Lambda ({D_{15}}) $ we shall apply the following theorem: 
\begin{atheorem}{1 (Gupta, \cite{gupta66})}
$ {\rm P}({D_m}) \cong \Lambda ({D_m}) $ if and only if $ or{d_p}(2) \equiv 0\;(\bmod 4) $ for every odd prime factor of $m$.  
\end{atheorem}
Since $ or{d_3}(2) = 2\not  \equiv 0\;(\bmod 4) $, it follows that $ {\rm P}({D_{15}})\not  \cong \Lambda ({D_{15}}) $, and we have our counterexample.
 
It is also the case that $ {\rm P}({D_{10}}) \cong {\rm P}({D_5}) $ and $ \Lambda ({D_{10}}) \cong \Lambda ({D_5}) $. This gives a counterexample to the generalization of Theorem C2 and our Theorem \ref{t23}. The container decomposition has proven quite useful in finding these isomorphisms.  

\begin{center}

\begin{tabular}{||c || c | c || c || c | c || c || c | c |}
	\hline
	 $ m $ & $ \left| \Rho \left( D_{ m } \right)  \right| $ & $ \left| \Lambda \left( D_{ m } \right)  \right| $ & $ m $ & $ \left| \Rho \left( D_{ m } \right)  \right| $ & $ \left| \Lambda \left( D_{ m } \right)  \right| $ & $ m $ & $ \left| \Rho \left( D_{ m } \right)  \right| $ & $ \left| \Lambda \left( D_{ m } \right)  \right| $ \\ \hline
	3 &	6 &	9 &	36 &	63 &	90 &	69 &	1587 &	1587 \\ \hline 
4 &	4 &	4 &	37 &	1369 &	1369 &	70 &	455 &	455 \\ \hline 
5 &	25 &	25 &	38 &	190 &	361 &	71 &	5041 &	2556 \\ \hline 
6 &	6 &	9 &	39 &	507 &	507 &	72 &	117 &	144 \\ \hline 
7 &	49 &	28 &	40 &	70 &	70 &	73 &	1387 &	730 \\ \hline 
8 &	10 &	10 &	41 &	861 &	861 &	74 &	1369 &	1369 \\ \hline 
9 &	36 &	63 &	42 &	147 &	147 &	75 &	1575 &	1575 \\ \hline 
10 &	25 &	25 &	43 &	344 &	645 &	76 &	247 &	418 \\ \hline 
11 &	66 &	121 &	44 &	99 &	154 &	77 &	2387 &	2387 \\ \hline 
12 &	15 &	18 &	45 &	585 &	585 &	78 &	507 &	507 \\ \hline 
13 &	169 &	169 &	46 &	529 &	276 &	79 &	6241 &	3160 \\ \hline 
14 &	49 &	28 &	47 &	2209 &	1128 &	80 &	130 &	130 \\ \hline 
15 &	75 &	75 &	48 &	69 &	72 &	81 &	2268 &	4455 \\ \hline 
16 &	22 &	22 &	49 &	2107 &	1078 &	82 &	861 &	861 \\ \hline 
17 &	153 &	153 &	50 &	525 &	525 &	83 &	3486 &	6889 \\ \hline 
18 &	36 &	63 &	51 &	459 &	459 &	84 &	210 &	210 \\ \hline 
19 &	190 &	361 &	52 &	208 &	208 &	85 &	765 &	765 \\ \hline 
20 &	40 &	40 &	53 &	2809 &	2809 &	86 &	344 &	645 \\ \hline 
21 &	147 &	147 &	54 &	270 &	513 &	87 &	2523 &	2523 \\ \hline 
22 &	66 &	121 &	55 &	1155 &	1155 &	88 &	165 &	220 \\ \hline 
23 &	529 &	276 &	56 &	112 &	91 &	89 &	2047 &	1068 \\ \hline 
24 &	33 &	36 &	57 &	570 &	1083 &	90 &	585 &	585 \\ \hline 
25 &	525 &	525 &	58 &	841 &	841 &	91 &	1183 &	1183 \\ \hline 
26 &	169 &	169 &	59 &	1770 &	3481 &	92 &	598 &	345 \\ \hline 
27 &	270 &	513 &	60 &	120 &	120 &	93 &	1023 &	1023 \\ \hline 
28 &	70 &	49 &	61 &	3721 &	3721 &	94 &	2209 &	1128 \\ \hline 
29 &	841 &	841 &	62 &	341 &	186 &	95 &	3515 &	3515 \\ \hline 
30 &	75 &	75 &	63 &	441 &	441 &	96 &	141 &	144 \\ \hline 
31 &	341 &	186 &	64 &	94 &	94 &	97 &	4753 &	4753 \\ \hline 
32 &	46 &	46 &	65 &	845 &	845 &	98 &	2107 &	1078 \\ \hline 
33 &	198 &	363 &	66 &	198 &	363 &	99 &	1584 &	3069 \\ \hline 
34 &	153 &	153 &	67 &	2278 &	4489 &	100 &	600 &	600 \\ \hline 
35 &	455 &	455 &	68 &	204 &	204 &	101 &	10201 &	10201 \\ \hline 
\end{tabular}
\end{center}

\begin{samepage}

\end{samepage}

\end{document}